\documentclass{mcom-l}
\usepackage[T1]{fontenc}
\usepackage{amssymb}
\usepackage{amsmath}
\usepackage{amscd}
\usepackage{amsfonts}
\usepackage{textcomp}
\usepackage{amssymb}
\usepackage{latexsym}
\usepackage{url}
\usepackage{epsfig}
\usepackage{graphics}
\usepackage{graphicx}
\usepackage{algorithmic}
\usepackage{algorithm}
\usepackage{multirow}
\usepackage{pgf}
\DeclareFontFamily{OT1}{pzc}{}
\DeclareFontShape{OT1}{pzc}{m}{it}{<-> s * [1.10] pzcmi7t}{}
\DeclareMathAlphabet{\mathpzc}{OT1}{pzc}{m}{it}
\newcommand{\Elld}{\mathpzc{Ell}}
\newcommand{\F}{\mathbb{F}}

\newcommand{\Z}{\mathbb{Z}}

\newcommand{\C}{\mathbb{C}}
\newcommand{\PP}{\mathcal{P}}

\newtheorem{theorem}{Theorem}[section]
\newtheorem{lemma}[theorem]{Lemma}
\newtheorem{proposition}[theorem]{Proposition}
\newtheorem{corollary}[theorem]{Corollary}

\theoremstyle{definition}
\newtheorem{definition}[theorem]{Definition}
\newtheorem{example}[theorem]{Example}

\theoremstyle{remark}
\newtheorem{remark}[theorem]{Remark}

\numberwithin{equation}{section}

\begin{document}

\title{Pairing the volcano}
\author{Sorina Ionica}
\address{Laboratoire d'Informatique de l'Ecole Polytechnique (LIX)\footnote{This work has been carried out at Prism Laboratory, University of Versailles and is part of the author's Phd thesis.},
91128 Palaiseau CEDEX, France}
\email{sorina.ionica@m4x.org}

\author{Antoine Joux}
\address{DGA and Universit\'e de Versailles Saint-Quentin-en-Yvelines,
45 avenue des \'Etats-Unis\\
78035 Versailles CEDEX, France}
\email{antoine.joux@m4x.org}

\subjclass[2010]{Primary 14H52; Secondary 14K02}

\begin{abstract}
  Isogeny volcanoes are graphs whose vertices are elliptic curves and
  whose edges are $\ell$-isogenies. Algorithms allowing to travel on
  these graphs were developed by Kohel in his thesis (1996) and later
  on, by Fouquet and Morain (2001). However, up to now, no method was
  known, to predict, before taking a step on the volcano, the
  direction of this step. Hence, in Kohel's and Fouquet-Morain
  algorithms, many steps are taken before choosing the right direction.
  In particular, ascending or horizontal isogenies are usually found
  using a trial-and-error approach. In this paper, we propose an
  alternative method that efficiently finds all points $P$ of order
  $\ell$ such that the subgroup generated by $P$ is the kernel of an
  horizontal or an ascending isogeny. In many cases, our method is
  faster than previous methods. This is an extended version of a paper published in the proceedings of ANTS 2010. In addition, we treat the case of 2-isogeny volcanoes and we derive from the group structure of the curve and the pairing a new invariant of the endomorphism class of an elliptic curve. Our benchmarks show that the resulting algorithm for endomorphism ring computation is faster than Kohel's method for computing the $\ell$-adic valuation of the conductor of the endomorphism ring for small $\ell$.
\end{abstract}

\maketitle{}
\section{Introduction}

Let $E$ be an elliptic curve defined over a finite field $\F_q$, where $q=p^r$ is a prime power.
Let $\pi$ be the Frobenius endomorphism, i.e., $\pi(x,y)\mapsto(x^q,y^q)$ and denote by $t$ its trace.
Assume that $E$ is an ordinary curve and let $\mathcal{O}_E$ denotes
its ring of endomorphisms. We know~\cite[Th.~V.3.1]{Siv} that $\mathcal{O}_E$
is an order in an imaginary quadratic field $K$.
Let $d_{\pi} = t^2-4q$ be the discriminant of $\pi$. We can write $d_{\pi}=g^2d_K$, where $d_K$ is the discriminant
of the quadratic field $K$. There are only a finite number of possibilities for $\mathcal{O}_E$, since
$\Z[\pi]\subset \mathcal{O}_E\subset \mathcal{O}_{d_K}.$
Indeed, this requires that $f$, the conductor of $\mathcal{O}_E$, divides
$g$, the conductor of $\Z[\pi]$.
The cardinality of $E$ over $\F_q$ is $\#E(\F_q)=q+1-t$.
Two isogenous elliptic curves over $\F_q$ have the same cardinality, and thus the same trace $t$. In his thesis~\cite{Kohel},
Kohel studies how curves in $\textrm{Ell}_t(\F_q)$, the set of curves defined over $\F_q$ with
trace $t$, are related via isogenies of degree $\ell$. More
precisely, he describes the structure of the graph of $\ell$-isogenies
defined on $\textrm{Ell}_t(\F_q)$. He relates this
graph to orders in $\mathcal{O}_K$ and uses modular polynomials to
find the conductor of $\textrm{End}(E)$.


Fouquet and Morain~\cite{FouMor} call the connected components of this
graph \textit{isogeny volcanoes} and show that it is possible to travel through
these structures using modular polynomials, even without knowing the cardinality of the curve. Moreover, they compute the $\ell$-adic
valuation of the trace $t$, for $\ell|g$ and hence obtain some information on the cardinality of the curve.
 Recently, more applications of isogeny volcanoes were found: the computation of Hilbert class polynomials~\cite{BelBro08,Sutherland1},
of modular polynomials~\cite{Sutherland3} and of endomorphism rings of elliptic curves~\cite{Sutherland2}.

All the above methods make use of algorithms for traveling efficiently
on volcanoes. These algorithms need to walk on the crater, to
descend from the crater to the floor or to ascend from the floor to
the crater. In many cases, the structure of the $\ell$-Sylow subgroup
of the elliptic curve, allows one, after taking a step on the volcano, to
decide whether this step is ascending, descending or horizontal
(see~\cite{MirMor,MirMor1}). Note that, since a large fraction of
isogenies are descending, finding one of them is quite easy. However,
no known method can find horizontal or ascending isogenies without
using a trial-and-error approach. In this paper, we describe a first
solution to this open problem, which applies when the cardinality of
the curve is known, and propose a method that efficiently finds a
point $P$ of order $\ell$ that spans the kernel of an ascending (or
horizontal isogeny). Our approach relies on the computation of a small number of pairings.
We then show that our algorithms for traveling on the volcano are, in
many cases, faster than the ones from~\cite{Kohel} and~\cite{FouMor}.
In addition, we obtain a simple method that detects most curves on the crater of their
volcano. Until now, the only curves that were easily identified
were those on the floor of volcanoes.
Finally, we introduce an invariant for curves lying at the same level in the $\ell$-volcano.
In order to compute this invariant, we need to compute the group structure and a few pairings.
This paper is organized as follows: Sections~\ref{IsogenyVolcanoes} and~\ref{Pairing}
present definitions and properties of isogeny volcanoes and
pairings. Section~\ref{PreliminaryResults} explains our method to find
ascending or horizontal isogenies using pairing computations. Finally,
in Section~\ref{WalkingOnTheVolcano}, we use this method to improve the
algorithms for ascending a volcano, for walking on its crater and for
computing the $\ell$-adic valuation of the conductor of the endomorphism ring.

\section{Background on isogeny volcanoes}\label{IsogenyVolcanoes}
In this paper, we rely on some results from complex multiplication theory and on Deuring's lifting
theorems. We denote by $\Elld_d(\C)$ the set of $\C$-isomorphism classes of elliptic curves whose
endomorphism ring is the order $\mathcal{O}_{d}$, with discriminant $d<0$. In this setting,
there is an action of the class group of $\mathcal{O}_{d}$ on $\Elld_d(\C)$.
Let $E\in \Elld_{d}(\C)$, $\Lambda$ its corresponding lattice
and $\mathfrak{a}$ an $\mathcal{O}_d$-ideal. We have a canonical
homomorphism from $\C/\Lambda$ to $\C/\mathfrak{a}^{-1}\Lambda$
which induces an isogeny usually denoted by $E\rightarrow \hat{\mathfrak{a}}*E$. This action on $\Elld_{d}(\C)$ is transitive and free~\cite[Prop.~II.1.2]{SivAdvanced}. Moreover~\cite[Cor.~II.1.5]{SivAdvanced}, the degree of the application
$E\rightarrow \hat{\mathfrak{a}}*E$ is $N(\mathfrak{a})$, the norm of
the ideal $\mathfrak{a}$.

Let $\F_q$ be a finite field, with $q=p^r$ and $p$ a prime number. We denote by $\Elld_{d}(\F_q)$ the set of isomorphism classes of elliptic curves defined over $\F_q$, having endomorphism ring $\mathcal{O}_d$. From Deuring's theorems~\cite{Deuring}, if $p$ is a prime number that splits completely in the ring class field of $\mathcal{O}_d$, we get a bijection $\Elld_{d}(\C)\rightarrow \Elld_{d}(\F_q)$. Furthermore, the class group action in characteristic zero respects this bijection, and we get an action of the class group also on $\Elld_d(\F_q)$.

\subsection{Isogeny volcanoes}
Consider an elliptic curve $E$ defined over a finite field $\F_q$.
Let $\ell$ be a prime different from $\textrm{char}(\F_q)$ and $I:E\rightarrow E^{'}$ be an $\ell$-isogeny, i.e. an isogeny of degree~$\ell$. We denote by $\mathcal{O}_d$ and $\mathcal{O}_{d'}$ the
endomorphism rings of $E$ and $E'$, respectively.
As shown in~\cite{Kohel}, this means that $\mathcal{O}_d$ contains $\mathcal{O}_{d^{'}}$ or $\mathcal{O}_{d^{'}}$ contains $\mathcal{O}_d$ or the two endomorphism rings coincide.
If $\mathcal{O}_d$ contains $\mathcal{O}_{d^{'}}$, we say that $I$ is a \textit{descending} isogeny. Otherwise,
if $\mathcal{O}_d$ is contained in $\mathcal{O}_{d^{'}}$, we say that $I$ is a \textit{ascending} isogeny. If $\mathcal{O}_d$ and $\mathcal{O}_{d^{'}}$ are equal, then we call the isogeny \textit{horizontal}.
In his thesis, Kohel shows that horizontal isogenies exist only if the conductor of $\mathcal{O}_d$ is not divisible by $\ell$. Moreover, in this case there are exactly $\left (\frac{d}{\ell}\right )+1$ horizontal $\ell$-isogenies, where $d$ is the discriminant of $\mathcal{O}_d$. If $\left (\frac{d}{\ell} \right )=1$, then $\ell$ is split in $\mathcal{O}_d$ and the two horizontal isogenies correspond to the two actions $E\rightarrow \hat{\mathfrak{l}}*E$
and $E\rightarrow \hat{\bar{\mathfrak{l}}}*E$, where the two ideals
$\mathfrak{l}$ and $\bar{\mathfrak{l}}$ satisfy $(\ell)=\mathfrak{l}\,\bar{\mathfrak{l}}$.
In a similar way, if $\left (\frac{d}{\ell}\right )=0$, then $\ell$ is
ramified, i.e. $(\ell)=\mathfrak{l}^2$ and there is exactly one horizontal isogeny starting from $E$.
In order to describe the structure of the graph whose vertices are (isomorphism classes of) elliptic
curves with a fixed number of points and whose edges are $\ell$-isogenies, we recall
the following definition~\cite{Sutherland1}.
\begin{definition}
An $\ell$-volcano is a connected undirected graph with vertices
partitioned into levels $V_0,\ldots,V_h$, in which the subgraph on $V_0$
(the \textit{crater}) is a regular connected graph of degree at most 2
and
\begin{list}{}{\setlength{\topsep}{0in}}
\item[(a)] For $i>0$, each vertex in $V_i$ has exactly one edge leading to a vertex in $V_{i-1}$, and every edge not on the crater is of this form.
\item[(b)] For $i<h$, each vertex in $V_i$ has degree $\ell+1$.
\end{list}
\end{definition}
We call the level $V_h$ \textit{the floor} of the volcano. Vertices lying on the floor
have degree 1.
The following proposition~\cite{Sutherland1} follows essentially from~\cite[Prop. 23]{Kohel}.
\begin{proposition}\label{DefinitionVolcan}
Let $p$ be a prime number, $q=p^r$, and $d_{\pi}=t^2-4q$. Take $\ell\neq p$ another prime number. Let $G$ be the undirected graph with vertex
set  \rm{Ell}$_t(\F_q)$ and edges $\ell$-isogenies defined over $\F_q$. We denote by $\ell^h$ the largest power of $\ell$
dividing the conductor of $d_{\pi}$. Then the connected components of
$G$ that do not contain curves with $j$-invariant $0$ or $1728$ are $\ell$-volcanoes of height $h$ and for each component $V$, we have~:
\begin{list}{}{\setlength{\topsep}{0in}}
\item[(a)] The elliptic curves whose $j$-invariants lie in $V_0$ have endomorphism rings isomorphic to some $\mathcal{O}_{d_0}\supseteq \mathcal{O}_{d_{\pi}}$ whose conductor is not divisible by $\ell$.
\item[(b)] The elliptic curves whose $j$-invariants lie in $V_i$ have endomorphism rings isomorphic to $\mathcal{O}_{d_i}$, where $d_i=\ell^{2i}d_0$.
\end{list}
\end{proposition}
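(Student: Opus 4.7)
The plan is to fix a connected component $V$ of $G$ not containing any curve of $j$-invariant $0$ or $1728$ and verify the volcano axioms together with the stated endomorphism-ring labels. First, I would use the chain $\Z[\pi] \subseteq \mathcal{O}_E \subseteq \mathcal{O}_{d_K}$, which forces the conductor $f$ of $\mathcal{O}_E$ to divide $g$, the conductor of $\Z[\pi]$. Writing $g = \ell^h g'$ with $\ell \nmid g'$, Kohel's trichotomy implies that any $\ell$-isogeny changes the $\ell$-adic valuation of the conductor by $-1$, $0$, or $+1$, while leaving the prime-to-$\ell$ part fixed. Hence the prime-to-$\ell$ part of the conductor is constant on $V$, say equal to some $g_0 \mid g'$. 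Setting $d_0 = g_0^2 d_K$ and $d_i = \ell^{2i} d_0$, and defining $V_i$ to be the vertices of $V$ whose endomorphism ring is $\mathcal{O}_{d_i}$, parts~(a) and~(b) of the proposition reduce to tautologies once the structural claims are established.

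Next, I would count the $\ell + 1$ cyclic subgroups of order $\ell$ of a vertex $E \in V_i$ using Kohel's classification. For $i = 0$, no ascending isogeny can exist, horizontal isogenies number $\left(\frac{d_0}{\ell}\right) + 1 \leq 2$ as recalled in the text, and the remaining subgroups give descending isogenies; this yields the regularity and the degree bound on the crater. For $0 < i < h$, exactly one ascending isogeny exists (the one whose kernel corresponds to the unique order of index $\ell$ properly containing $\mathcal{O}_{d_i}$), no horizontal isogeny exists since $\ell \mid f$, and the remaining $\ell$ subgroups give descending isogenies; this yields axiom~(a) and degree $\ell + 1$. At $i = h$, any descending isogeny would yield a conductor divisible by $\ell^{h+1}$, contradicting $f \mid g$, so the only outgoing edge is the unique ascending one and the floor $V_h$ has degree $1$.

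Finally, I would establish that the crater is connected. By Deuring's lifting theorem, $\Elld_{d_0}(\F_q) \cong \Elld_{d_0}(\C)$, and this set admits a transitive and free action of the class group $\mathrm{Cl}(\mathcal{O}_{d_0})$ under which the horizontal $\ell$-isogenies correspond precisely to the actions of the primes above $\ell$ in $\mathcal{O}_{d_0}$, namely $\mathfrak{l}$ and its conjugate. Thus the craters of the various connected components of $G$ whose endomorphism ring at the crater equals $\mathcal{O}_{d_0}$ are exactly the orbits of the cyclic subgroup generated by $[\mathfrak{l}]$; since every vertex of $V$ reaches $V_0$ by iteratively following its unique ascending edge, the component $V$ corresponds to a single such orbit and $V_0$ is connected.

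The main obstacle I anticipate is the isogeny count at intermediate levels: proving that from $E \in V_i$ with $0 < i < h$ exactly one of the $\ell + 1$ outgoing $\ell$-isogenies is ascending. This rests on~\cite[Prop.~23]{Kohel}, whose proof analyzes how $\mathcal{O}_{d_i}$ acts on $E[\ell]$ to single out the distinguished subgroup whose quotient realizes the unique larger order $\mathcal{O}_{d_{i-1}}$. The exclusion of the $j$-invariants $0$ and $1728$ is essential here, since it avoids the non-trivial automorphism groups that would otherwise identify distinct subgroups as producing the same isogeny and thereby spoil the degree counts on which the whole argument depends.
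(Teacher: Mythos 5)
Your proposal is correct and follows essentially the same route as the paper, which gives no proof of its own but simply notes that the statement follows from Kohel's Proposition~23 (as presented by Sutherland): conductor trichotomy for $\ell$-isogenies, Kohel's counting of ascending/horizontal/descending kernels at each level, and the free transitive class-group action identifying the crater with an orbit of the class of an ideal above $\ell$. The only points you leave schematic --- the rationality of all $\ell+1$ kernel subgroups above the floor (Frobenius acting as a scalar on $E[\ell]$ when $\ell$ divides $g/f$) and the argument that a component cannot contain two crater cycles joined through lower levels --- are exactly the content of the cited Proposition~23, so this matches the paper's level of justification.
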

Elliptic curves are determined by their $j$-invariant, up to a
twist\footnote{For a definition of twists of elliptic curves, refer
to~\cite{Siv}.}.
Throughout the paper, we refer to a vertex in a volcano
by giving the curve or its $j$-invariant.

\subsection{Exploring the volcano}
Given a curve $E$ on an $\ell$-volcano, two methods are known to find its neighbours. The first method relies on the use of modular polynomials. The $\ell$-th \textit{modular polynomial}, denoted by $\Phi_{\ell}(X,Y)$ is a polynomial with integer coefficients. It satisfies the following property: given two elliptic curves $E$ and $E'$ with $j$-invariants $j(E)$ and $j(E')$ in $\F_q$, there is an
 $\ell$-isogeny from $E$ to $E'$ defined over $\F_q$, if and only if,
$\#E(\F_q)=\#E'(\F_q)$ and $\Phi_{\ell}(j(E),j(E'))=0$.
As a consequence, the curves related to $E$ via an $\ell$-isogeny can
be found by solving $\Phi_{\ell}(X,j(E))=0$. As stated
in~\cite{Schoof}, this polynomial\footnote{The case where the
  modular polynomial does not have any root corresponds to a degenerate
case of isogeny volcanoes containing a single curve and no $\ell$-isogenies.} may have $0$, $1$, $2$ or $\ell+1$ roots in $\F_q$. In order to find an edge on the volcano, it suffices to find a root $j'$ of this polynomial. Finally, if we need the equation of the curve $E'$ with $j$-invariant $j'$, we may use the formula in~\cite{Schoof}.


The second method to build $\ell$-isogenous curves constructs, given a point $P$  of order $\ell$ on $E$, the $\ell$-isogeny $I:E\rightarrow E'$ whose kernel $G$ is generated by $P$ using V\'elu's classical formulae~\cite{Velu} in an extension field $\F_{q^r}$.
To use this approach, we need the explicit coordinates of points of order $\ell$ on $E$.
We denote by $G_i$, $1\leq i\leq \ell+1$, the $\ell+1$ subgroups of order $\ell$ of $E$. Miret et al.~\cite{MirMor1} give the degree $r_i$ of the smallest extension field of $\F_q$ such that $G_i\subset \F_{q^{r_i}}$, $1\leq i\leq \ell+1$.
This degree is related to the order of $q$ in the group $\F_{\ell}^*$, that we denote by $\textrm{ord}_{\ell}(q)$.
\begin{proposition}
Let $E$ defined over $\F_q$ be an elliptic curve with $\kappa$ rational $\ell$-isogenies, with $\ell>2$. Let $G_i$, $1\leq i\leq \kappa$, be the kernels of these isogenies, and let $r_i$ be the minimum value for which $G_i\subset E(\F_{q^{r_i}})$.
\begin{list}{}{\setlength{\topsep}{0in}}
\item[(a)] If $\kappa=1$ then $r_1=\textrm{ord}_{\ell}(q)$ or $r_1=2\textrm{ord}_{\ell}(q)$.
\item[(b)] If $\kappa=\ell+1$ then either $r_i=\textrm{ord}_{\ell}(q)$ for all $i$, or $r_i=2\textrm{ord}_{\ell}(q)$ for all $i$.
\item[(c)] If $\kappa=2$ then $r_i|\ell-1$ for $i=1,2$.
\end{list}
\end{proposition}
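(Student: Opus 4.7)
The plan is to reduce everything to linear algebra on the $\ell$-torsion. Fix a basis of $E[\ell]\cong (\Z/\ell\Z)^2$, so that Frobenius $\pi$ acts as a $2\times 2$ matrix $M_\pi$ over $\F_\ell$ with characteristic polynomial $X^2-tX+q\pmod{\ell}$. A rational cyclic subgroup $G_i$ of order $\ell$ is exactly a $\pi$-stable line of $E[\ell]$, i.e.\ an $\F_\ell$-rational eigenspace of $M_\pi$; let $\lambda_i\in\F_\ell^*$ denote the corresponding eigenvalue. Since a point $P\in\bar{\F}_q$ lies in $E(\F_{q^r})$ iff $\pi^r(P)=P$, the whole subgroup $G_i$ sits in $E(\F_{q^r})$ iff $\lambda_i^r\equiv 1\pmod{\ell}$. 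Consequently $r_i$ equals the multiplicative order of $\lambda_i$ in $\F_\ell^*$.

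Next I split into cases according to $\kappa$, the number of $\F_\ell$-rational eigenspaces of $M_\pi$. If $\kappa=\ell+1$, every line of $E[\ell]$ is $\pi$-stable, which forces $M_\pi$ to be a scalar matrix $\lambda I$; then all $\lambda_i$ coincide with $\lambda$, and $\det(M_\pi)=\lambda^2\equiv q\pmod{\ell}$. If $\kappa=1$, the characteristic polynomial has a double root $\lambda\in\F_\ell$ but $M_\pi$ is not diagonalizable (otherwise $\kappa=\ell+1$), so again $\lambda^2\equiv q\pmod{\ell}$. If $\kappa=2$, the characteristic polynomial has two distinct roots $\lambda_1,\lambda_2\in\F_\ell^*$, and no further arithmetic is needed for (c): the values $r_i=\textrm{ord}(\lambda_i)$ automatically divide $|\F_\ell^*|=\ell-1$.

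Finally, I finish cases (a) and (b) by an order computation in $\F_\ell^*$. Setting $m=\textrm{ord}(\lambda)$ and $n=\textrm{ord}_\ell(q)$, the relation $\lambda^2\equiv q\pmod{\ell}$ gives $n=\textrm{ord}(\lambda^2)=m/\gcd(m,2)$, which forces $m\in\{n,2n\}$. For case (b) this single value $m$ is the common $r_i$, which rules out any "mixed" configuration between the two possibilities. The only step that really requires thought is the structural trichotomy of the previous paragraph --- essentially the observation that $\kappa=\ell+1$ can only happen when $\pi$ acts as a scalar on $E[\ell]$, and that $\kappa=1$ forces a non-split Jordan block with a repeated eigenvalue; once these algebraic pictures are in place, everything else is bookkeeping inside the cyclic group $\F_\ell^*$ of order $\ell-1$.
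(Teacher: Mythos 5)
Your proof is correct. The paper itself gives no proof of this proposition --- it is quoted as background from Miret et al.\ (the reference cited just before the statement) --- so there is no internal argument to compare against; your reduction to the Frobenius action on $E[\ell]\cong(\Z/\ell\Z)^2$, identifying each rational kernel $G_i$ with an $\F_\ell$-rational eigenline of $M_\pi$ and $r_i$ with the multiplicative order of the corresponding eigenvalue $\lambda_i$, is exactly the standard route, and the case analysis (scalar matrix when $\kappa=\ell+1$, non-split repeated eigenvalue when $\kappa=1$, two distinct eigenvalues when $\kappa=2$) together with $\lambda^2\equiv q\pmod{\ell}$ and $\operatorname{ord}(\lambda)\in\{\operatorname{ord}_\ell(q),\,2\operatorname{ord}_\ell(q)\}$ is complete. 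The only hypotheses you use implicitly and could state are that $\ell\neq\operatorname{char}\F_q$, so that $E[\ell]$ is indeed $(\Z/\ell\Z)^2$ and $\det M_\pi\equiv q\not\equiv 0\pmod{\ell}$, which guarantees $\lambda_i\in\F_\ell^{*}$.
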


In some cases, if the $\ell$-torsion is not defined over $\F_q$, it may be
preferable to replace the curve by its twist, if the $\ell$-torsion of the twist is defined over an extension field of smaller degree.
We also need the following corollary~\cite{MirMor1}.
\begin{corollary}\label{existencelTorsion}
Let $E/\F_q$ be an elliptic curve over $\F_q$ and $\tilde{E}$ its quadratic twist. If $E/\F_q$ has $1$ or $\ell+1$ rational $\ell$-isogenies,
then $\#E(\F_{q^{\textrm{ord}_{\ell}q}})$ or $\#\tilde{E}(\F_{q^{\textrm{ord}_{\ell}q}})$ is a multiple of $\ell$.
Moreover, if there are $\ell+1$ rational isogenies, then it is a multiple of $\ell^2$.
\end{corollary}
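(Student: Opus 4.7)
The plan is to invoke the preceding proposition, which reduces the problem to the cases $r_i=r$ and $r_i=2r$, with $r:=\textrm{ord}_{\ell}(q)$. Under either hypothesis ($\kappa=1$ or $\kappa=\ell+1$) the proposition dictates that each relevant $r_i$ takes one of these two values, and moreover all of them agree when $\kappa=\ell+1$.

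In the $r_i=r$ case, the rational $\ell$-isogeny kernel $G_i\subset E(\F_{q^r})$ immediately gives $\ell\mid\#E(\F_{q^r})$. If in addition $\kappa=\ell+1$ and all $r_i=r$, then the $\ell+1$ subgroups of order $\ell$ together cover $E[\ell]$, so $E[\ell]\subset E(\F_{q^r})$ and therefore $\ell^2\mid\#E(\F_{q^r})$, which is the ``moreover'' clause in this branch.

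The harder case is $r_i=2r$, where I would pass to the quadratic twist. Let $P$ be a nonzero point of $G_i$. Since $G_i$ is Frobenius-stable and $\pi^{2r}$ fixes $P$ pointwise, $\pi^r$ acts on $G_i$ as multiplication by some $\lambda$ with $\lambda^2\equiv 1\pmod{\ell}$; the assumption $r_i\neq r$ rules out $\lambda=1$, hence $\lambda\equiv -1\pmod{\ell}$. In coordinates this forces $x(P)\in\F_{q^r}$ while $y(P)\in\F_{q^{2r}}\setminus\F_{q^r}$. Fixing a non-square $d\in\F_q^\times$ and using the twist isomorphism $(x,y)\mapsto(x,y/\sqrt{d})$ from $E$ to $\tilde{E}$ (defined over $\F_{q^2}$), I would check that the $q^r$-power Frobenius negates both $y(P)$ and $\sqrt{d}$, hence fixes the ratio. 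This produces a nonzero $\ell$-torsion point in $\tilde{E}(\F_{q^r})$, so $\ell\mid\#\tilde{E}(\F_{q^r})$. Applying the same construction to each of the $\ell+1$ subgroups in the $\kappa=\ell+1$ case transports all of $E[\ell]$ into $\tilde{E}(\F_{q^r})$, yielding the $\ell^2$ refinement for the twist.

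The main obstacle is the assertion that $(\sqrt{d})^{q^r}=-\sqrt{d}$, which only holds when $r$ is odd. When $r$ is even, $\sqrt{d}\in\F_{q^2}\subseteq\F_{q^r}$, so $\tilde{E}$ and $E$ become isomorphic over $\F_{q^r}$ and the twist trick no longer separates them. This degenerate case must be handled on its own, either by a direct eigenvalue argument using $\lambda^r=(\lambda^2)^{r/2}=q^{r/2}$ together with the defining property of $r=\textrm{ord}_{\ell}(q)$, or by interpreting the statement ``multiple of $\ell$'' in a slightly weaker sense adapted to the even-$r$ regime.
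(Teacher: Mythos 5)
The paper gives no proof of this corollary (it is quoted from Miret et al.), so your proposal has to stand on its own. The branch $r_i=\textrm{ord}_{\ell}(q)$ is fine, and your twist computation is correct when $r=\textrm{ord}_{\ell}(q)$ is odd. The problem is exactly the case you flag and leave open, $r$ even with $r_i=2r$, and under your reading of $\tilde{E}$ it is not a removable technicality: if $\tilde{E}$ means the quadratic twist over $\F_q$, the statement is simply false in that regime, so no argument can close the gap. Concretely, take $\ell=13$ and $q=103$, so $q\equiv -1 \pmod{13}$ and $r=\textrm{ord}_{13}(q)=2$, and let $E/\F_{103}$ be an ordinary curve of trace $t=10$. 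Then $t^2-4q=-312$ is exactly divisible by $13$, so the characteristic polynomial of $\pi$ on $E[13]$ is $(X-5)^2$ with $\pi$ non-scalar: $E$ has exactly one rational $13$-isogeny, and $\pi$ acts on its kernel by $5$, which has order $4=2r$, so we are in the problematic branch. But $\#E(\F_{q^{2}})=(q+1)^2-t^2=10716=2^2\cdot 3\cdot 19\cdot 47$ is prime to $13$, and the $\F_q$-twist $\tilde{E}$ is $\F_{q^{2}}$-isomorphic to $E$, hence has the same cardinality; neither count is a multiple of $13$.

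The statement only becomes correct (and this is how it is used later in the paper, where one works with the curve or a twist over the extension field) if $\tilde{E}$ is understood as the quadratic twist of $E$ over $\F_{q^{r}}$; for odd $r$ this agrees with your reading, for even $r$ it does not. With that reading, your own eigenvalue set-up finishes all cases without any coordinate computation: $\pi^{r}$ acts on the kernel $G_i$ by $\lambda=\pm 1$; if $\lambda=1$ then $G_i\subset E(\F_{q^{r}})$, while if $\lambda=-1$ the Frobenius of the $\F_{q^{r}}$-twist acts on the transported subgroup by $-\lambda=1$, so $\ell$ divides its order; and when $\kappa=\ell+1$ the action of $\pi$ on $E[\ell]$ is scalar, so the same sign works simultaneously for all of $E[\ell]$ and one gets the $\ell^2$ refinement. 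Finally, your suggested patch via $\lambda^{r}=(\lambda^2)^{r/2}=q^{r/2}$ conflates $\lambda$ (the action of $\pi^{r}$, whose square is $1$) with the eigenvalue of $\pi$ itself (whose square is $q$ modulo $\ell$), so it does not lead anywhere as written.
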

\subsection{The group structure of the elliptic curve on the volcano}
\begin{figure}
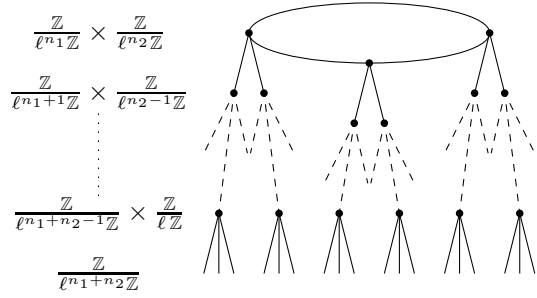

\begin{center}
\begin{pgfpicture}{-4cm}{1cm}{13cm}{4cm}
\pgfsetxvec{\pgfpoint{0.4cm}{0cm}}
\pgfsetyvec{\pgfpoint{0cm}{0.40cm}}
\pgfellipse[stroke]{\pgfxy(5,10)}{\pgfxy(4,0)}{\pgfxy(0,1)}
\pgfputat{\pgfxy(-4,10)}{\pgfbox[center,center]{$\frac{\Z}{\ell^{n_1}\Z}\times
\frac{\Z}{\ell^{n_2}\Z}   $}}
\pgfcircle[fill]{\pgfxy(1,10)}{0.05cm}
\pgfcircle[fill]{\pgfxy(9,10)}{0.05cm}
\pgfcircle[fill]{\pgfxy(5,9)}{0.05cm}
\pgfline{\pgfxy(1,10)}{\pgfxy(0.5,8)}
\pgfline{\pgfxy(1,10)}{\pgfxy(1.5,8)}
\pgfline{\pgfxy(9,10)}{\pgfxy(8.5,8)}
\pgfline{\pgfxy(9,10)}{\pgfxy(9.5,8)}
\pgfline{\pgfxy(5,9)}{\pgfxy(5.5,7)}
\pgfline{\pgfxy(5,9)}{\pgfxy(4.5,7)}
\pgfputat{\pgfxy(-4,8)}{\pgfbox[center,center]{$\frac{\Z}{\ell^{n_1+1}\Z}\times
\frac{\Z}{\ell^{n_2-1}\Z}   $}}
\pgfcircle[fill]{\pgfxy(0.5,8)}{0.05cm}
\pgfcircle[fill]{\pgfxy(1.5,8)}{0.05cm}
\pgfcircle[fill]{\pgfxy(8.5,8)}{0.05cm}
\pgfcircle[fill]{\pgfxy(9.5,8)}{0.05cm}
\pgfcircle[fill]{\pgfxy(4.5,7)}{0.05cm}
\pgfcircle[fill]{\pgfxy(5.5,7)}{0.05cm}

\pgfxyline(0,4)(0.5,2)
\pgfxyline(0,4)(0,2)
\pgfxyline(0,4)(-0.5,2)
\pgfxyline(2,4)(1.5,2)
\pgfxyline(2,4)(2,2)
\pgfxyline(2,4)(2.5,2)

\pgfxyline(8,4)(8.5,2)
\pgfxyline(8,4)(8,2)
\pgfxyline(8,4)(7.5,2)
\pgfxyline(10,4)(9.5,2)
\pgfxyline(10,4)(10,2)
\pgfxyline(10,4)(10.5,2)

\pgfxyline(4,4)(3.5,2)
\pgfxyline(4,4)(4,2)
\pgfxyline(4,4)(4.5,2)
\pgfxyline(6,4)(5.5,2)
\pgfxyline(6,4)(6,2)
\pgfxyline(6,4)(6.5,2)

\pgfsetdash{{0.1cm}{0.1cm}}{0.05cm}
\pgfxyline(0.5,8)(-0.5,6)
\pgfxyline(0.5,8)(0,4)
\pgfxyline(0.5,8)(0.95,6)
\pgfxyline(1.5,8)(1.05,6)
\pgfxyline(1.5,8)(2,4)
\pgfxyline(1.5,8)(2.5,6)

\pgfxyline(8.5,8)(7.5,6)
\pgfxyline(8.5,8)(8,4)
\pgfxyline(8.5,8)(8.95,6)
\pgfxyline(9.5,8)(9.05,6)
\pgfxyline(9.5,8)(10,4)
\pgfxyline(9.5,8)(10.5,6)

\pgfxyline(4.5,7)(3.5,5)
\pgfxyline(4.5,7)(4,4)
\pgfxyline(4.5,7)(4.95,5)
\pgfxyline(5.5,7)(5.05,5)
\pgfxyline(5.5,7)(6,4)
\pgfxyline(5.5,7)(6.5,5)

\pgfsetdash{{0.02cm}{0.1cm}}{0.05cm}
\pgfxyline(-4,7.5)(-4,4.5)

\pgfcircle[fill]{\pgfxy(0,4)}{0.05cm}
\pgfcircle[fill]{\pgfxy(2,4)}{0.05cm}
\pgfcircle[fill]{\pgfxy(4,4)}{0.05cm}
\pgfcircle[fill]{\pgfxy(6,4)}{0.05cm}
\pgfcircle[fill]{\pgfxy(8,4)}{0.05cm}
\pgfcircle[fill]{\pgfxy(10,4)}{0.05cm}
\pgfputat{\pgfxy(-4,4)}{\pgfbox[center,center]{$\frac{\Z}{\ell^{n_1+n_2-1}\Z}\times
\frac{\Z}{\ell\,\Z}   $}}
\pgfputat{\pgfxy(-4,2)}{\pgfbox[center,center]{$\frac{\Z}{\ell^{n_1+n_2}\Z} $}}

\end{pgfpicture}
\caption{A regular volcano}
\label{RegularVolcano}
\end{center}
\end{figure}
Lenstra~\cite{Lenstra} relates the group structure of an elliptic
curve to its endomorphism ring by proving that
$E(\F_q)\simeq \mathcal{O}_E/(\pi-1)$
as $\mathcal{O}_E$-modules.
It is thus natural to see how this structure relates to the isogeny volcano.
From Lenstra's equation, we can deduce that $E(\F_q)\simeq
\Z/M\Z\times \Z/N\Z$, for some positive integers $N$ and $M$ with $N|M$. We denote by $g$ the conductor of $\Z[\pi]$ and we write $\pi=a+g\omega$, with:
$$a=
\left \{\begin{array}{l}
(t-g)/2 \\
t/2
\end{array}
\right.\mbox{~and~}\omega=\left \{\begin{array}{ll}
\frac{1+\sqrt{d_K}}{2}& \mbox{~~if}\,\, d_K\equiv 1\phantom{,3}\pmod{4}\\
\sqrt{\frac{d_K}{4}} &\mbox{~~if}\,\, d_K\equiv 0 \pmod{4}
\end{array}
\right.
$$
where $d_K$ is the discriminant of the quadratic imaginary field containing $\mathcal{O}_E$. Note that $N$ is maximal such that $E[N]\subset E(\F_q)$ and by~\cite[Lemma 1]{Ruck1} we get that $N=\textrm{gcd}(a-1,g/f)$, with $f$ the conductor of $\textrm{End}(E)$.
Note moreover that $N|M$, $N|(q-1)$ and $MN=\#E(\F_q).$
This implies that on an $\ell$-volcano the group structure of all the curves in a given level is the same.

\noindent
Let $E$ be a curve on the isogeny volcano such that
$v_{\ell}(N)<v_{\ell}(M)$. As explained in~\cite{MirMor} (in the case
$\ell=2$, but the result is general), $a$ is such that
$\displaystyle v_{\ell}(a-1)\geq \min\left\{v_{\ell}(g),v_{\ell}(\#E(\F_q))/2\right\}.$

\noindent Since $N=\gcd(a-1,g/f)$ and $v_{\ell}(N)\leq v_{\ell}(\#E(\F_q))/2$, it follows that $v_{\ell}(N)=v_{\ell}(g/f)$. As we descend, the valuation at $\ell$ of the conductor $f$ increases by $1$ at each level (by Proposition~\ref{DefinitionVolcan}b).
This implies that the $\ell$-valuation of $N$ for
curves at each level decreases by $1$ and is equal to $0$ for curves lying on the floor.
Note that if $v_{\ell}(\#E(\F_q))$ is even and the height $h$ of the volcano is greater than $v_{\ell}(\#E(\F_q))$, the structure of the $\ell$-torsion group is unaltered from the crater down to the level $h-v_{\ell}(\#E(\F_q))/2$. From this level down, the structure of the $\ell$-torsion groups starts changing as explained above. In the sequel, we call this
level the \textit{first stability level.}\footnote{Miret et al. call it simply \textit{the stability level}.}
A volcano with first stability level
equal to $0$, i.e. on the crater, is called \textit{regular} (see Figure~\ref{RegularVolcano}).
\vspace{0.1 cm}

\noindent
\textbf{Notations.}
Let $n\geq 0$. We denote by $E[\ell^n]$ the $\ell^n$-torsion subgroup, i.e. the subgroup of points of order dividing $\ell^n$ on the curve $E$, by $E[\ell^n](\F_{q^k})$ the subgroup of points of order dividing $\ell^n$ defined over an extension field of $\F_q$ and by $E[\ell^{\infty}](\F_{q})$ the $\ell$-Sylow subgroup of $E(\F_q)$.
\vspace{0.1 cm}

\section{Background on pairings}\label{Pairing}

Let $E$ be an elliptic curve defined over some finite field $\F_q$, $m$ an integer such that $m|\#E(\F_q)$.
 Let $k$ be the embedding degree, i.e. the smallest integer such that $m|q^k-1$. Let $P\in E[m](\F_{q^k})$ and $Q\in E(\F_{q^k})/mE(\F_{q^k})$. Let $f_{m,P}$ be the function whose divisor\footnote{For background on divisors, see~\cite{Siv}.}
is $m(P)-m(O)$, where $O$ is the point at infinity of the curve $E$. Take $R$ a random point in $E(\F_{q^k})$ such that the support of the divisor $D=(Q+R)-(R)$ is disjoint from the support of $f_{m,P}$.
Then we can define the Tate pairing as follows:
\begin{eqnarray*}
t_m:E[m](\F_{q^k})\times E(\F_{q^k})/mE(\F_{q^k})& \rightarrow & \F_{q^k}^*/(\F_{q^k}^*)^m\\
(P,Q) & \rightarrow & f_{m,P}(Q+R)/f_{m,P}(R).
\end{eqnarray*}

The Tate pairing is a bilinear non-degenerate map, i.e. for
all $P\in E[m](\F_{q^k})$ different from $O$ there is a $Q\in
E(\F_{q^k})/mE(\F_{q^k})$ such that $T_m(P,Q)\neq 1$. The output of the
pairing is only defined up to a coset of $(\F_{q^k}^*)^m$. However,
for implementation purposes, it is useful to have a uniquely defined
value and to use the \textit{reduced} Tate pairing, i.e.
$\displaystyle T_m(P,Q)=t_m(P,Q)^{(q-1)/m}\in \mu_m,$
where $\mu_m$ denotes the group of $m$-th roots of unity.
Pairing computation can be done in $O(\log{m})$ operations in $\F_q$ using Miller's
algorithm~\cite{JC:Miller04}. For more details and properties of pairings, the reader can refer to~\cite{Frey}. Note that in the recent years, in view of cryptographic applications, many
implementation techniques have been developed and pairings on elliptic curves can be computed very efficiently\footnote{See~\cite{GrabherGP08} for a fast recent implementation.}.\\
In the remainder of this paper we assume that the embedding degree is always $1$, i.e. $m|q-1$.
We will denote by $k$ a different integer.
Suppose now that $m=\ell^n$, with $n\geq 1$ and $\ell$ prime.
Now let $P$ and $Q$ be two $\ell^n$-torsion points on $E$.
We define the following symmetric pairing~\cite{JouNgu}
\begin{eqnarray}
S(P,Q)=(T_{\ell^n}(P,Q)\,T_{\ell^n}(Q,P))^{\frac{1}{2}}.
\end{eqnarray}
Note that for any point $P$, $T_{\ell^n}(P,P)=S(P,P)$. In the remainder of this paper, we call $S(P,P)$
\textit{the self-pairing} of $P$. We focus on the case where the
pairing $S$ is non-constant.
Suppose now that $P$ and $Q$ are two linearly independent $\ell^{n}$-torsion points. Then all $\ell^n$-torsion points $R$ can be expressed as $R=aP+bQ$.
Using bilinearity and symmetry of the $S$-pairing, we get
\begin{eqnarray*}
\log(S(R,R))=a^2\log(S(P,P))+2ab\,\log(S(P,Q))+b^2\log(S(Q,Q)) \pmod{\ell^n},
\end{eqnarray*}
where log is a discrete logarithm function in $\mu_{\ell^n}$.
We denote by $k(E)$ the largest integer such that the polynomial
\begin{eqnarray}\label{PairingPolynomial}
\PP(a,b)=a^2\log(S(P,P))+2ab\,\log(S(P,Q))+b^2\log(S(Q,Q))
\end{eqnarray}
is identically zero modulo $\ell^{n-k(E)-1}$ and nonzero modulo $\ell^{n-k(E)}$. Obviously,
since $S$ is non-constant we have $0\leq k(E)< n$. Dividing by $\ell^{n-k(E)-1}$, we may thus view $\PP$ as a polynomial in $\F_{\ell}[a,b]$. When we
want to emphasize the choice of $E$ and $\ell^n$, we write
$\PP_{E,\ell^n}$ instead of $\PP$.

Since $\PP$ is a non-zero quadratic polynomial, it
has at most two homogeneous roots, which means
that from all the $\ell+1$ subgroups of $E[\ell^n]/E[\ell^{n-1}]\simeq (\Z/\ell\Z)^2$, at most $2$
have self-pairings in $\mu_{\ell^{k(E)}}$ (see also~\cite{JouNgu}).
In the remainder of this paper, we denote by $N_{E,\ell^n}$ the number
of zeros of $\PP_{E,\ell^n}$.
Note that this number does not depend on the choice of the two generators $P$ and $Q$
of the $\ell^n$-torsion subgroup $E[\ell^n]$.
Moreover, we say that a $\ell^n$-torsion point $R$ has \textit{degenerate self-pairing} if $T_{\ell^n}(R,R)$ is a
$\ell^{k(E)}$-th root of unity and that $R$ has
\textit{non-degenerate self-pairing} if $T_{\ell^n}(R,R)$ is a
primitive $\ell^{k(E)+1}$-th root of unity. Also, if
$T_{\ell^n}(R,R)$ is a primitive $\ell^n$-th root of unity, we say
that $R$ has \textit{primitive self-pairing}.
\section{Determining directions on the volcano}\label{PreliminaryResults}

In this section, we explain how we can distinguish between different directions on the volcano by making use of pairings.
Given a point $P\in E[\ell^n](\F_q)$, we also need to know the degree
of the smallest extension field containing an $\ell^{n+1}$-torsion point
such that $\ell\tilde{P}=P$. The following result is taken from~\cite{Fouquet}.
\begin{proposition}\label{shapeVolcano}
Let $\ell>2$ and $E/\F_q$ be an elliptic curve which lies on an $\ell$-volcano whose height $h(V)$ is different from $0$.
Then the height of $V'$, the $\ell$-volcano of the curve $E/\F_{q^s}$ is
$ \displaystyle h(V')=h(V)+v_{\ell}(s).$
\end{proposition}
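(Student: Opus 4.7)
The plan is to convert the statement about volcano heights into one about conductors of imaginary quadratic orders, then reduce it to an $\ell$-adic estimate on a Lucas-type sequence. By Proposition~\ref{DefinitionVolcan}, the height $h(V)$ equals $v_\ell(g)$, where $g$ is the conductor of $\Z[\pi]$ in $\mathcal{O}_K$. Replacing $\F_q$ by $\F_{q^s}$ replaces the Frobenius $\pi$ by $\pi^s$; writing $g_s$ for the conductor of $\Z[\pi^s]$, the claim becomes
\begin{equation*}
v_\ell(g_s)\;=\;v_\ell(g)+v_\ell(s).
\end{equation*}

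To compute $g_s/g$, I would use the discriminant formula $\mathrm{disc}(\Z[\alpha])=(\alpha-\bar\alpha)^2$ together with the observation that $\pi^s-\bar\pi^s=v_s(\pi-\bar\pi)$, where $(v_s)$ is the Lucas sequence defined by $v_0=0$, $v_1=1$, $v_{s+1}=t\,v_s-q\,v_{s-1}$. This yields $g_s=|v_s|\,g$, so the statement reduces to showing $v_\ell(v_s)=v_\ell(s)$.

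Next I would expand $\pi=a+g\omega$ in the basis $\{1,\omega\}$ of $\mathcal{O}_K$ and apply the binomial theorem. A preliminary observation is that $\ell\nmid a$: since $h(V)\geq 1$ we have $\ell\mid g$, so $\ell\mid a$ would force $\ell\mid\pi$ and thus $\ell^2\mid\pi\bar\pi=q$, contradicting $\ell\neq\mathrm{char}(\F_q)$. Writing $\omega^k=c_k+d_k\omega$ with $c_k,d_k\in\Z$ and $d_1=1$, so that $\omega^k-\bar\omega^k=d_k(\omega-\bar\omega)$, one obtains
\begin{equation*}
v_s\;=\;\sum_{k=1}^{s}\binom{s}{k}\,a^{s-k}\,g^{k-1}\,d_k.
\end{equation*}
The $k=1$ term is $s\,a^{s-1}$ and has $\ell$-adic valuation exactly $v_\ell(s)$; the goal is then to show every $k\geq 2$ summand has strictly larger valuation.

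The hard part is precisely this combinatorial estimate. Applying the standard bound $v_\ell\!\bigl(\binom{s}{k}\bigr)\geq v_\ell(s)-v_\ell(k)$, the $k$-th summand has $\ell$-adic valuation at least $v_\ell(s)+(k-1)v_\ell(g)-v_\ell(k)$, so one must verify $(k-1)v_\ell(g)>v_\ell(k)$ for every $k\geq 2$. When $\ell\nmid k$ this is immediate from $v_\ell(g)\geq 1$; when $\ell\mid k$, writing $j=v_\ell(k)\geq 1$, it reduces to $\ell^j-1>j$, which holds for every $\ell\geq 3$. The case $\ell=2$, $j=1$, $v_\ell(g)=1$ yields equality instead of strict inequality, which is precisely why the hypothesis $\ell>2$ is needed. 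Granting the estimate, every $k\geq 2$ summand has valuation strictly larger than $v_\ell(s)$, whence $v_\ell(v_s)=v_\ell(s)$ and the formula $h(V')=h(V)+v_\ell(s)$ follows.
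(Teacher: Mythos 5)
Your argument is correct and complete. Note first that the paper itself gives no proof of this proposition: it is quoted from the reference \cite{Fouquet}, so there is no internal proof to compare against. Your reduction is the natural one and all steps check out: $h(V)=v_\ell(g)$ and $h(V')=v_\ell(g_s)$ by Proposition~\ref{DefinitionVolcan}; $d_{\pi^s}=(\pi^s-\bar\pi^s)^2=U_s^2\,d_\pi$ gives $g_s=|U_s|\,g$ with $U_s=(\pi^s-\bar\pi^s)/(\pi-\bar\pi)\in\Z$; the observation $\ell\nmid a$ (else $\ell\mid\pi$ forces $\ell^2\mid q$, impossible since $\ell\neq p$) pins the $k=1$ term of the binomial expansion at valuation exactly $v_\ell(s)$; and the estimate $v_\ell\bigl(\binom{s}{k}\bigr)\geq v_\ell(s)-v_\ell(k)$ together with $(k-1)v_\ell(g)>v_\ell(k)$ for $k\geq 2$ (using $v_\ell(g)\geq 1$ from $h(V)\neq 0$, and $\ell^j-1>j$ for $\ell\geq 3$) kills every higher term strictly, so $v_\ell(U_s)=v_\ell(s)$. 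You also correctly identify that both hypotheses, $h(V)\neq 0$ and $\ell>2$, enter exactly at these two points, and the failure of strictness at $\ell=2$, $j=1$, $v_\ell(g)=1$ is consistent with the paper's Remark on the problematic $2$-adic case. Two cosmetic remarks only: your sequence with $v_0=0$, $v_1=1$ is the first-kind Lucas sequence usually written $U_s$ (the letter $v$ is conventionally the second kind, and here it also collides with the valuation notation $v_\ell$), and it is worth stating explicitly that $d_k\in\Z$ because $\{1,\omega\}$ is a $\Z$-basis of $\mathcal{O}_K$, so $v_\ell(d_k)\geq 0$ may indeed be dropped.
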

From this proposition, it follows easily that if the structure of the subgroup $E[\ell^{\infty}](\F_q)$ on the curve $E$ is $\Z/\ell^{n_1}\Z \times \Z/\ell^{n_2}\Z$, then the smallest extension $K$ of $\F_q$ such that $E[\ell^{\infty}](K)$ is not isomorphic to $E[\ell^{\infty}](\F_q)$ is $\F_{q^\ell}$.

\begin{proposition}\label{ChangeStructure}
Let $\ell>2$ and $E/\F_q$ be an elliptic curve with $E[\ell^{\infty}](\F_q)\simeq \Z/\ell^{n_1}\Z\times \Z/\ell^{n_2}\Z$, with $n_2\geq 1$. Then
$$E[\ell^{\infty}](\F_{q^{\ell}})\simeq \Z/\ell^{n_1+1}\Z \times \Z/\ell^{n_2+1}\Z.$$
\end{proposition}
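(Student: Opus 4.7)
The plan is to work with the Tate module $T_\ell E \simeq \Z_\ell^2$, on which Frobenius $\pi$ acts as a matrix $M \in M_2(\Z_\ell)$ with characteristic polynomial $X^2 - tX + q$. For any $k \geq 1$ one has $E[\ell^\infty](\F_{q^k}) \simeq \Z_\ell^2/(M^k - I)\Z_\ell^2$ as abelian groups, and its invariant factor decomposition is read off the Smith normal form of $M^k - I$ over $\Z_\ell$. The hypothesis $n_2 \geq 1$ says $E[\ell] \subset E(\F_q)$, so $\pi$ acts as the identity on $E[\ell]$ and the isogeny $\pi - 1$ factors through $[\ell]$; equivalently, $(\pi-1)/\ell \in \mathcal{O}_E$. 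In matrix terms $M - I \in \ell\, M_2(\Z_\ell)$ and its elementary divisors are $\ell^{n_2} \mid \ell^{n_1}$.

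I would then carry out two parallel computations. First, to pin down $n_2'$, write $M - I = \ell^{n_2} D$ with $D \in M_2(\Z_\ell)$ having at least one entry of $\ell$-adic valuation zero. The binomial expansion
\[
M^\ell - I \;=\; \sum_{k=1}^{\ell} \binom{\ell}{k}(M-I)^k
\]
singles out the $k=1$ term, which equals $\ell^{n_2+1} D$, while all remaining terms lie in $\ell^{n_2+2}\, M_2(\Z_\ell)$ because $v_\ell\bigl(\binom{\ell}{k}\bigr) \geq 1$ for $1 \leq k \leq \ell-1$ and $(\ell-1)n_2 \geq 2$ whenever $\ell \geq 3$ and $n_2 \geq 1$. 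Hence $M^\ell - I = \ell^{n_2+1}(D + \ell D')$ with $D + \ell D' \equiv D \pmod{\ell}$, so the smaller invariant factor of $M^\ell - I$ is exactly $\ell^{n_2+1}$, i.e.\ $n_2' = n_2 + 1$. Second, to pin down the total size, I would compute $v_\ell(\det(M^\ell - I)) = v_\ell(\mathrm{Nm}_{K/\mathbb{Q}}(\pi^\ell - 1))$ locally at each prime $\mathfrak{l}$ of $\mathcal{O}_K$ above $\ell$. The same binomial argument, combined with the bound $v_\mathfrak{l}(\pi - 1) \geq v_\mathfrak{l}(\ell) = e_\mathfrak{l}$ coming from $(\pi-1)/\ell \in \mathcal{O}_E \subset \mathcal{O}_K$, shows that the $k=1$ term again dominates and $v_\mathfrak{l}(\pi^\ell - 1) = e_\mathfrak{l} + v_\mathfrak{l}(\pi - 1)$. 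Summing with weights $f_\mathfrak{l}$ and using $\sum_{\mathfrak{l}\mid \ell} e_\mathfrak{l} f_\mathfrak{l} = [K:\mathbb{Q}] = 2$ gives $v_\ell(\#E(\F_{q^\ell})) = v_\ell(\#E(\F_q)) + 2 = n_1 + n_2 + 2$. Combined with $n_2' = n_2 + 1$ this forces $n_1' = n_1 + 1$.

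The main obstacle will be the size computation, since it implicitly covers the split, inert, and ramified decompositions of $\ell$ in $\mathcal{O}_K$; phrasing everything uniformly in terms of the local valuations $v_\mathfrak{l}$ and the invariants $(e_\mathfrak{l}, f_\mathfrak{l})$ side-steps an explicit case split. A more conceptual shortcut for $n_2' = n_2 + 1$ is to invoke Proposition~\ref{shapeVolcano}: the $\ell$-volcano of $E/\F_{q^\ell}$ has height $h+1$, while $E$ retains its endomorphism ring and hence its level, so its distance to the floor grows by one; only the size computation is then needed to recover $n_1' = n_1 + 1$.
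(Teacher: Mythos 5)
Your proposal is correct, but it proves the proposition by a genuinely different route than the paper. The paper stays inside the volcano formalism: it invokes Proposition~\ref{shapeVolcano} to see that over $\F_{q^{\ell}}$ the volcano gains one level, descends from $E$ to a curve $E'$ on the floor (whose Sylow subgroup is cyclic), shows by a Tate-pairing primitivity argument (comparing $T_{\ell^{n_2}}$ over $\F_q$ with $T_{\ell^{n_2+1}}$ over $\F_{q^{\ell}}$, using $v_{\ell}(q^{\ell}-1)=v_{\ell}(q-1)+1$) that the cyclic part grows by exactly one power of $\ell$, and then transports the conclusion back up to $E$ along the descending path. You instead work directly on the Tate module: the binomial expansion of $M^{\ell}-I$, with $M-I=\ell^{n_2}D$ and $D$ having a unit entry, pins down the smaller invariant factor as $\ell^{n_2+1}$, and the local valuation computation of $\mathrm{N}_{K/\mathbb{Q}}(\pi^{\ell}-1)$ at the primes above $\ell$ (using $(\pi-1)/\ell\in\mathcal{O}_E\subset\mathcal{O}_K$ and $\sum_{\mathfrak{l}\mid\ell}e_{\mathfrak{l}}f_{\mathfrak{l}}=2$) gives $v_{\ell}(\#E(\F_{q^{\ell}}))=n_1+n_2+2$, forcing the larger factor to be $\ell^{n_1+1}$; both dominance estimates check out, including the $k=\ell$ term, precisely because $\ell\geq 3$. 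What your approach buys is self-containment and transparency: it uses neither pairings nor the volcano structure results quoted from Fouquet, it handles split, inert and ramified $\ell$ uniformly, and it makes visible exactly where $\ell>2$ is needed (the bounds $(\ell-1)n_2\geq 2$ and $(\ell-1)v_{\mathfrak{l}}(\pi-1)>e_{\mathfrak{l}}$ fail for $\ell=2$, consistent with the paper's Remark and Example showing $\Delta\neq 1$ can occur there). What the paper's route buys is economy within its own framework — it reuses the pairing lemmas and volcano geometry that the rest of the paper develops anyway, and the pairing computation on the floor curve is exactly the kind of argument later refined in Propositions~\ref{descendancyProof} and~\ref{existenceNondegenerate}; your closing suggestion to get $n_2'=n_2+1$ from Proposition~\ref{shapeVolcano} is in fact close in spirit to how the paper begins its proof.
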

\begin{proof}
  Note that $E$ lies on an $\ell$-volcano $V/\F_q$ of height at least $n_2$. We consider a curve $E'$ lying on the floor of $V/\F_q$ such that there is a descending path of isogenies between $E$ and $E'$. Obviously, we have $E'[\ell^{\infty}](\F_q)\simeq \Z/\ell^{n_1+n_2}\Z$.
By Proposition~\ref{shapeVolcano}, $V/\F_{q^{\ell}}$ has one extra down level, which means that the curve $E'$
is no longer on the floor, but on the level just above the floor. Consequently, we have that $E'[\ell]\subset E'(\F_{q^{\ell}})$ and, moreover, $E'[\ell^{\infty}](\F_{q^{\ell}})\simeq \Z/\ell^{n_1+n_2+\Delta}\Z \times  \Z/\ell \Z$.

We now show that $\Delta=1$. Note first that $\ell^{n_2}|q-1$ and that $v_{\ell}(q^{\ell}-1)=v_{\ell}(q-1)+1$. We denote by $P$ a point of order $\ell^{n_1+n_2+\Delta}$ on the curve $E'/\F_{q^{\ell}}$.
Then, without restraining the generality, we may assume that
\begin{eqnarray}\label{tateDegenerate}
T_{\ell^{n_2}}^{(\F_q)}(\ell^{n_1+\Delta}P,\ell^{\Delta}P)=f_{\ell^{n_2},\ell^{n_1+\Delta}P}(\ell^{\Delta}P)^{\frac{q-1}{\ell^{n_2}}}\in \mu_{\ell^{n_2}}\backslash \mu_{\ell^{n_2-1}},
\end{eqnarray}
and
\begin{eqnarray}\label{tateDegenerate1}
T_{\ell^{n_2+1}}^{(\F_{q^{\ell}})}(\ell^{n_1+\Delta-1}P,P)=f_{\ell^{n_2+1},\ell^{n_1+\Delta-1}P}(P)^{\frac{q^{\ell}-1}{\ell^{n_2+1}}}\in \mu_{\ell^{n_2+1}}\backslash \mu_{\ell^{n_2}}.
\end{eqnarray}
By using the bilinearity of the pairing and the fact that $f_{\ell^{n_2+1},R}=f_{\ell^{n_2},R}^{\ell}$ for a point of order $\ell^{n_2}$ (up to a constant), we get from Equation~\eqref{tateDegenerate}
 $$f_{\ell^{n_2},\ell^{n_1+\Delta}P}(P)^{\ell\frac{q^{\ell}-1}{\ell^{n_2+1}}}\in \mu_{\ell^{n_2}}\backslash \mu_{\ell^{n_2-1}}.$$
By using Equality~(\ref{tateDegenerate}), this is true if and only if $\Delta=1$.
By ascending on the volcano from $E'$ to $E$, we deduce that the structure of the $\ell$-torsion of $E$ over
$\F_{q^{\ell}}$ is necessarily
\begin{eqnarray*}
E[\ell^{\infty}](\F_{q^{\ell}})\simeq \Z/\ell^{n_1+1}\Z\times \Z/\ell^{n_2+1}\Z.
\end{eqnarray*}
\end{proof}

\begin{remark}\label{extVolcano2}
If $\ell=2$, the only problematic case is when the $\ell$-adic valuation of the conductor of $\Z[\pi]$ is $1$. In all the other cases, the volcano gets exactly one extra level over $\F_{q^2}$ (see~\cite{Fouquet}). Reasoning as in the proof of Proposition~\ref{ChangeStructure}, we get that for a curve $E$ on a $2$-volcano of height at least $2$ such that $E[2^{\infty}](\F_q)\simeq \Z/2^{n_1}\Z\times \Z/2^{n_2}\Z$, the $2$-Sylow group structure over $\F_{q^2}$ is
$$E[2^{\infty}](\F_{q^2})\simeq \Z/2^{n_1+\Delta}\Z\times \Z/2^{n_2+1}\Z.$$
However, the following example shows that when $\ell=2$, $\Delta$ is not always $1$.
\end{remark}

\begin{example}
Let $E$ be an an elliptic curve defined over $\F_q$ with $q=257$ given by the equation
$$y^2=x^3+206x^2+221x+33.$$
Then $E[2^{\infty}][\F_q]\simeq \Z/2\Z\times \Z/2\Z$ and $E[2^{\infty}][\F_{q^2}]\simeq \Z/2^4\Z\times \Z/2^2\Z$.
\end{example}

\begin{remark}
We note that in the general context of ordinary abelian varieties, Freeman and Lauter~\cite{FreLau} proved that if the $\ell^n$-torsion is defined over a finite field $\F_q$, then the $\ell^{n+1}$-torsion is defined over $\F_{q^{\ell}}$.
\end{remark}
\noindent
We give some lemmas explaining the relations between pairings on two isogenous curves.

\begin{lemma}\label{TatePairing}
Suppose $E/\F_q$ is an elliptic curve and $P,Q$ are points in $E(\F_q)$ of order $\ell^n$, $n\geq 1$.
Denote by $\tilde{P},\tilde{Q}\in E[\bar{\F}_q]$ two points such that $\ell\tilde{P}=P$ and $\ell\tilde{Q}=Q$.
Suppose that $\ell^n|q-1$. Then we have the following relations for the Tate pairing
\begin{list}{}{\setlength{\topsep}{0in}}
\item[(a)] If $\tilde{P},\tilde{Q}\in E[\F_q]$, then
$
\displaystyle
T_{\ell^{n+1}}(\tilde{P},\tilde{Q})^{\ell^2}=T_{\ell^n}(P,Q).
$
\item[(b)] Suppose $\ell\geq 3$. If $\tilde{Q}\in E[\F_{q^{\ell}}]\backslash E[\F_q]$, then
$
\displaystyle
T_{\ell^{n+1}}(\tilde{P},\tilde{Q})^{\ell}=T_{\ell^n}(P,Q).
$
\item[(c)] Let $\ell=2$ and $\tilde{Q}\in E[\F_{q^{2}}]\backslash E[\F_q]$. Then
$
\displaystyle
T_{2^{n+1}}(\tilde{P},\tilde{Q})^{\ell}=T_{2^n}(P,Q)T_{2^n}(P,T),
$
where $T$ is a point of order $2$.
\end{list}
\end{lemma}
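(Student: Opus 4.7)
The overarching plan is to combine bilinearity of the Tate pairing with the multiplicative Miller recurrence
\[
f_{\ell^{n+1},\tilde P}=f_{\ell,\tilde P}^{\ell^n}\cdot f_{\ell^n,P},
\]
which (after the standard normalization making Miller functions have leading coefficient $1$ at $O$) is verified by a divisor computation: using the Miller divisor $\ell(\tilde P)-(P)-(\ell-1)(O)$ for $f_{\ell,\tilde P}$ and $\ell^n(P)-\ell^n(O)$ for $f_{\ell^n,P}$, both sides have divisor $\ell^{n+1}(\tilde P)-\ell^{n+1}(O)$. Bilinearity then gives $T_{\ell^{n+1}}(\tilde P,\tilde Q)^{\ell}=T_{\ell^{n+1}}(\tilde P,Q)$ and $T_{\ell^{n+1}}(\tilde P,\tilde Q)^{\ell^{2}}=T_{\ell^{n+1}}(P,Q)$. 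For part~(a), the same divisor argument gives $f_{\ell^{n+1},P}=f_{\ell^n,P}^\ell$ since $P\in E[\ell^n]$, so directly
\[
T_{\ell^{n+1}}(P,Q)=f_{\ell^n,P}(Q)^{\ell(q-1)/\ell^{n+1}}=T_{\ell^n}(P,Q),
\]
and raising to $\ell^2$ yields (a).

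For parts~(b) and~(c) the pairing is evaluated over $\F_{q^\ell}$ (resp.\ $\F_{q^2}$), since Proposition~\ref{ChangeStructure} and Remark~\ref{extVolcano2} force $E[\ell^{n+1}]\subset E(\F_{q^\ell})$. Choosing $\tilde P\in E(\F_q)$ is harmless, because replacing $\tilde P$ by $\tilde P+R$ with $R\in E[\ell]$ multiplies the pairing by $T_{\ell^{n+1}}(R,\tilde Q)$, whose $\ell$-th power is $T_{\ell^{n+1}}(R,Q)=T_\ell(R,Q)^{(q^\ell-1)/(q-1)}=T_\ell(R,Q)^\ell=1$. Substituting the Miller factorization,
\[
T_{\ell^{n+1}}(\tilde P,Q)=f_{\ell,\tilde P}(Q)^{(q^\ell-1)/\ell}\cdot f_{\ell^n,P}(Q)^{(q^\ell-1)/\ell^{n+1}}.
\]
The first factor is $1$: $f_{\ell,\tilde P}(Q)\in\F_q^*$, and since $1+q+\cdots+q^{\ell-1}\equiv 0\pmod\ell$ the exponent $(q^\ell-1)/\ell$ is a multiple of $q-1$. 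The second factor equals $T_{\ell^n}(P,Q)^N$, where $N=(q^\ell-1)/[\ell(q-1)]$ for $\ell$ odd and $N=(q+1)/2$ for $\ell=2$.

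It remains to identify $N$ modulo $\ell^n$ (the order of $T_{\ell^n}(P,Q)$). Writing $q=1+c\ell^n$, for $\ell$ odd one obtains $N\equiv 1+c\ell^n(\ell-1)/2\equiv 1\pmod{\ell^n}$ (using that $(\ell-1)/2\in\Z$), so $T_{\ell^n}(P,Q)^N=T_{\ell^n}(P,Q)$ and part~(b) follows. For $\ell=2$, $N=1+(q-1)/2$, so
\[
T_{2^n}(P,Q)^N=T_{2^n}(P,Q)\cdot T_{2^n}\!\left(P,\tfrac{q-1}{2}Q\right);
\]
setting $T=\tfrac{q-1}{2}Q$, the point $T$ has order dividing $2$ since $2^n\mid q-1$, producing the extra factor $T_{2^n}(P,T)$ of~(c). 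The main obstacles are, first, rigorously justifying that the Miller-normalization constant vanishes after raising to $(q^\ell-1)/\ell^{n+1}$ (which I would handle by working throughout with the normalization $f_{1,P}=1$ and verifying that the recurrence produces no extra constant); and second, the $\ell=2$ bookkeeping required to identify $T$ correctly, the nondegenerate case being precisely $v_2(q-1)=n$, for which $\tfrac{q-1}{2}Q$ is genuinely of order $2$.
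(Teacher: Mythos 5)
Your part (a) and your exponent bookkeeping are fine, and for (b)--(c) you take a genuinely transposed route from the paper: you use bilinearity in the \emph{second} slot to replace $\tilde Q$ by $Q$, and then try to handle the first slot via the Miller factorization $f_{\ell^{n+1},\tilde P}=f_{\ell,\tilde P}^{\ell^n}f_{\ell^n,P}$ together with the congruence $N=(1+q+\cdots+q^{\ell-1})/\ell\equiv 1\pmod{\ell^n}$ (resp.\ $N=1+\tfrac{q-1}{2}$ for $\ell=2$). The paper does the opposite: it kills the first slot immediately ($T_{\ell^{n+1}}(\tilde P,\tilde Q)^{\ell}=T^{(\F_{q^{\ell}})}_{\ell^n}(P,\tilde Q)$ via $\operatorname{div}(f_{\ell^{n+1},P})=\operatorname{div}(f_{\ell^n,P}^{\ell})$) and then deals with the non-rational point in the \emph{second} slot by a Galois argument: the exponent $(q^{\ell}-1)/\ell^n$ is rewritten as a norm, the evaluation divisor is replaced by the sum of the Frobenius orbit $\tilde Q+\pi(\tilde Q)+\cdots+\pi^{\ell-1}(\tilde Q)=Q$ (or $Q+T$ when $\ell=2$), and Weil reciprocity disposes of the correction function $f$ because its divisor is $\F_q$-rational.

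The gap is in your ``choosing $\tilde P\in E(\F_q)$ is harmless'' step. Your observation that $T_{\ell^{n+1}}(\tilde P,\tilde Q)^{\ell}$ does not depend on which preimage $\tilde P$ of $P$ is taken is correct, but it only lets you move among the $\ell^{2}$ preimages $\tilde P+R$, $R\in E[\ell]$; it does not produce a rational one. A rational $\tilde P$ exists iff $P\in\ell E(\F_q)$, and nothing in the hypotheses guarantees this. Worse, in the situations where the paper actually invokes (b) and (c) — curves at or above the first stability level with $E[\ell^{\infty}](\F_q)\simeq \Z/\ell^{n}\Z\times\Z/\ell^{n}\Z$, as in Case 2 of Proposition~\ref{existenceNondegenerate}, the two-stability-levels discussion and Theorem~\ref{classInvariant}, where one takes $\tilde P=\tilde Q=\bar P\notin E(\F_q)$ — no point of order $\ell^{n}$ lies in $\ell E(\F_q)$, so the reduction is vacuous. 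When $\tilde P\notin E(\F_q)$ the function $f_{\ell,\tilde P}$ is not $\F_q$-rational, so your argument that the first factor $f_{\ell,\tilde P}(Q)^{(q^{\ell}-1)/\ell}$ equals $1$ (namely ``an element of $\F_q^{*}$ raised to a multiple of $q-1$'') collapses; that factor is exactly the nontrivial term, and showing it is trivial is equivalent to the lemma itself. To close the gap you would need a Galois-theoretic input of the kind the paper uses (Frobenius-orbit/norm plus Weil reciprocity, applied now in the first argument), not just the exponent congruence. The $\ell=2$ identification of $T$ as $\tfrac{q-1}{2}Q$ (possibly the identity when $v_2(q-1)>n$) versus the paper's $T=\pi(\tilde Q)-\tilde Q$ is a secondary discrepancy that would also be resolved by such an argument.
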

\begin{proof}
(a) By writing down the divisors of the functions $f_{\ell^{n+1},\tilde{P}}$, $f_{\ell^n,\tilde{P}}$, $f_{\ell^n,P}$, one can easily
check that
$$f_{\ell^{n+1},\tilde{P}}=(f_{\ell,\tilde{P}})^{\ell^n}\cdot f_{\ell^n,P}.$$
We evaluate these functions at some points $Q+R$ and $R$ (where $R$ is carefully chosen) and raise the equality to the power
$(q-1)/\ell^n$.\\
(b) Due to the equality on divisors $\textrm{div}(f_{\ell^{n+1},P})=\textrm{div}(f_{\ell^n,P}^{\ell})$, we have
\begin{eqnarray*}
T_{\ell^{n+1}}(\tilde{P},\tilde{Q})^\ell=T_{\ell^n}^{(\F_{q^{\ell}})}(P,\tilde{Q}),
\end{eqnarray*}
where $T_{\ell^n}^{(\F_{q^{\ell}})}$ is the $\ell^n$-Tate pairing for $E$ defined over $\F_{q^{\ell}}$.
It suffices then to show that
$\displaystyle T_{\ell^n}^{(\F_{q^{\ell}})}(P,\tilde{Q})=T_{\ell^n}(P,Q).$
We have
\begin{eqnarray}
\nonumber  T_{\ell^n}^{(\F_{q^{\ell}})}(P,\tilde{Q})&=&f_{\ell^n,P}([\tilde{Q}+R]-[R])^\frac{(1+q+\dots+q^{\ell-1})(q-1)}{\ell^n}\\\nonumber&=&
f_{\ell^n,P}((\tilde{Q}+R)+(\pi(\tilde{Q})+R)+(\pi^2(\tilde{Q})+R)+\dots\\\label{Frobenius}&+&(\pi^{\ell-1}(\tilde{Q})+R)-\ell(R))^{\frac{(q-1)}{\ell^n}}
\end{eqnarray}
where $R$ is a random point defined over $\F_q$.
It is now easy to see that for $\ell\geq 3$,
\begin{eqnarray}\label{frobeniussum}
\tilde{Q}+\pi(\tilde{Q})+\pi^2(\tilde{Q})+\ldots+\pi^{\ell-1}(\tilde{Q})=\ell\tilde{Q}=Q,
\end{eqnarray}
because $\pi(\tilde{Q})=\tilde{Q}+T$, where $T$ is a point of order $\ell$.
By applying Weil's reciprocity law~\cite[Ex. II.2.11]{Siv}, it follows that the equation~\eqref{Frobenius} becomes:
\begin{eqnarray}\label{finally}
T_{\ell^n}^{(\F_{q^{\ell}})}(P,\tilde{Q})&=&\left( \frac{f_{\ell^n,P}(Q+R)}{f_{\ell^n,P}(R)}\right )^{\frac{q-1}{\ell^n}}f((P)-(O))^{q-1},
\end{eqnarray}
where $f$ is such that $\textrm{div}(f)=(\tilde{Q}+R)+(\pi(\tilde{Q})+R)+(\pi^2(\tilde{Q})+R)+...+(\pi^{\ell-1}(\tilde{Q})+R)-(Q+T+R)-(\ell-1)(R)$. Note that this divisor is $\F_q$-rational, so $f((P)-(O))^{q-1}=1$. This concludes the proof.\\
(c) The sum at~\eqref{frobeniussum} becomes
\begin{eqnarray}
\tilde{Q}+\pi(\tilde{Q})=Q+T,
\end{eqnarray}
where $T$ is a point of order $2$.
Consequently, we have an equation similar to equation~\eqref{finally}
\begin{eqnarray*}
T_{2^n}^{(\F_{q^{2}})}(P,\tilde{Q})=\left( \frac{f_{2^n,P}(Q+T+R)}{f_{2^n,P}(R)}\right )^{\frac{q-1}{2^n}}f((P)-(O))^{q-1},
\end{eqnarray*}
where $f$ is such that $\textrm{div}(f)=(\tilde{Q}+R)+(\pi(\tilde{Q})+R)-(Q+T+R)-(R)$. We know that
$f$ is rational, hence $f((P)-(O))^{q-1}=1$. We conclude that
$$T_{2^{n+1}}(\tilde{P},\tilde{Q})^{2}=T_{2^n}(P,Q)T_{2^n}(P,T).$$
\end{proof}
\begin{lemma}\label{ImportantLemma}
Let $\phi:E\rightarrow E'$ be a separable isogeny defined over a finite field $\F_q$, $\ell\in \Z$ such that $\ell|q-1$.
\begin{list}{}{\setlength{\topsep}{0in}}
\item[(a)]  Denote by $d$ the degree of the isogeny and by $P$ an $\ell$-torsion on the curve $E$ such
that $\phi(P)$ is an $\ell$-torsion point on $E'$, and $Q$ a point on $E$. Then we have
$$
\displaystyle
T_{\ell}(\phi(P),\phi(Q))=T_{\ell}(P,Q)^d.
$$
\item[(b)] Let $\phi:E\rightarrow E'$ be a separable isogeny of degree $\ell$ defined over $\F_q$, $P$ a $\ell\ell'$-torsion point such that
$\textrm{Ker}\,\,\phi=\langle \ell'P \rangle$ and $Q$ a point on the curve $E$.
Then we have
$$
\displaystyle
T_{\ell}(\phi(P),\phi(Q))=T_{\ell\ell'}(P,Q)^{\ell}.
$$
\end{list}
\end{lemma}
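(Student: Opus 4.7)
The plan is to derive both parts from the isogeny-pullback identity for the Tate pairing:
\[
T_m(\phi(P),\phi(Q)) = T_m(P,Q)^{d},
\]
valid for a separable isogeny $\phi\colon E\to E'$ of degree $d$ defined over $\F_q$, any integer $m$ with $m\mid q-1$, $P\in E[m]$, and $Q\in E(\F_q)$. I would first establish this formula by a divisor computation: pulling back, $\phi^* f_{m,\phi(P)}$ has divisor $m\sum_{T\in\ker\phi}(P+T)-m\sum_{T\in\ker\phi}(T)$, while $f_{m,P}^{d}$ has divisor $md(P)-md(O)$. Their difference telescopes to $m\sum_{T\in\ker\phi}\bigl[(P+T)-(P)-(T)+(O)\bigr]$, each summand of which is the divisor of an elementary translation function $g_{P,T}$. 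Since $\ker\phi$ is stable under Frobenius, the product $\prod_{T}g_{P,T}$ is $\F_q$-rational; evaluating it on a $\F_q$-rational degree-zero divisor and raising to the power $q-1$ contributes trivially. This is precisely the Weil-reciprocity style argument already carried out in the proof of Lemma \ref{TatePairing}(b).

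Part (a) then follows immediately by applying the identity with $m=\ell$, using the hypothesis that $\phi(P)\in E'[\ell]$ makes both sides well defined.

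For part (b), the point $P$ has order $\ell\ell'$, so I apply the identity at the finer level $m=\ell\ell'$ with $d=\deg\phi=\ell$ to obtain
\[
T_{\ell\ell'}(\phi(P),\phi(Q)) = T_{\ell\ell'}(P,Q)^{\ell}.
\]
It remains to identify the left-hand side with $T_\ell(\phi(P),\phi(Q))$. Since $\phi(\ell'P)=O'$, the image $\phi(P)$ has order dividing $\ell$, so the Miller function $f_{\ell\ell',\phi(P)}$ agrees with $f_{\ell,\phi(P)}^{\ell'}$ up to a multiplicative constant. Tracking the exponents, one finds that $f_{\ell\ell',\phi(P)}(D_{\phi(Q)})^{(q-1)/(\ell\ell')}=f_{\ell,\phi(P)}(D_{\phi(Q)})^{(q-1)/\ell}$, i.e.\ $T_{\ell\ell'}(\phi(P),\phi(Q))=T_\ell(\phi(P),\phi(Q))$. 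Combining with the previous display yields the claim.

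The main obstacle will be the divisor/error-function step in the first paragraph: explicitly exhibiting $\prod_{T}g_{P,T}$, checking it is $\F_q$-rational, and verifying that it is killed by the final exponentiation. Once this pullback identity is in hand, both parts reduce to bookkeeping with bilinearity and the orders of the points at play.
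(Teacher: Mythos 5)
Your part (a) follows essentially the paper's own argument: pull back the Miller function along $\phi$, compare divisors, and check that the correction function is $\F_q$-rational and killed by the final exponentiation. Nothing to add there.

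Part (b) contains a concrete error. From $\ker\phi=\langle \ell'P\rangle$ you get $\phi(\ell'P)=O'$, i.e. $\ell'\phi(P)=O'$, so $\phi(P)$ has order dividing $\ell'$, \emph{not} dividing $\ell$ (in the paper's applications $\ell'=\ell^{n_2-1}$, so $\phi(P)$ genuinely has large order). Consequently $f_{\ell\ell',\phi(P)}$ agrees (up to a constant) with $f_{\ell',\phi(P)}^{\ell}$, not with $f_{\ell,\phi(P)}^{\ell'}$, and your level-reduction step actually gives $T_{\ell\ell'}(\phi(P),\phi(Q))=T_{\ell'}(\phi(P),\phi(Q))$ rather than $T_{\ell}(\phi(P),\phi(Q))$. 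With that correction your strategy — apply the degree-$d$ pullback identity of part (a) at level $m=\ell\ell'$ (which needs $\ell\ell'\mid q-1$, implicitly assumed since $T_{\ell\ell'}$ appears at all) and then lower the level in the first argument — does go through and yields
$$T_{\ell'}(\phi(P),\phi(Q))=T_{\ell\ell'}(P,Q)^{\ell}.$$
This is exactly what the paper's own computation establishes: there one pulls back $f_{\ell',\phi(P)}$, uses $\#\ker\phi=\ell$ to compare with $f_{\ell\ell',P}$, and raises the resulting equality to the power $(q-1)/\ell'$; the subscript $\ell$ on the left-hand side of the displayed statement should be read as $\ell'$, which is also how the lemma is used later (e.g. passing from $T_{\ell^{n_2-1}}(Q_1,Q_1)$ to $T_{\ell^{n_2}}(Q,Q)$). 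So your derivation of (b) from (a) is a legitimate and slightly cleaner alternative to the paper's second divisor computation, but as written the assertion about the order of $\phi(P)$ is false, and the identity you state is not the one your own reduction produces.
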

\begin{proof}
 (a) We have
\begin{eqnarray*}
(\phi)^*(f_{\ell,\phi(P)})&=&\ell \sum_{K\in \textrm{Ker}\phi} ((P+K)-(K))=\ell \sum_{K\in \textrm{Ker}\phi} ((P)-(O))\\&+&\textrm{div}\left
(\left(\prod_{K\in \textrm{Ker}\phi } \frac{l_{K,P}}{v_{K+P}}\right)^{\ell }\right ),
\end{eqnarray*}
where $l_{K,P}$ is the straight line passing through $K$ and $P$ and $v_{K+P}$ is the vertical line passing through $K+P$.
It follows that for some point $S$ on $E$
\begin{eqnarray*}
f_{\ell,\phi(P)}\circ \phi(S)=f_{\ell,P}^d(S)\left (\prod _{K\in \textrm{Ker}\phi}\frac{l_{K,P}(S)}{v_{K+P}(S)}\right )^{\ell}.
\end{eqnarray*}
We obtain the desired formula by evaluating the equality above at two points carefully chosen $Q+R$ and $R$, and then by raising to the power $\frac{q-1}{\ell}$.\\
\noindent
(b) This time we have
\begin{eqnarray*}
(\phi)^*(f_{\ell',\phi(P)})&=& \ell' \sum_{K\in \textrm{Ker}\phi} ((P+K)-(K))= \ell' \sum_{K\in \textrm{Ker}\phi} ((P)-(O))\\&&+\textrm{div}\left
(\left(\prod_{K\in \textrm{Ker}\phi } \frac{l_{K,P}}{v_{K+P}}\right)^{\ell'}\right ),
\end{eqnarray*}
Since $\#\textrm{Ker}\phi=\ell$, we get
\begin{eqnarray*}
f_{\ell',\phi(P)}\circ \phi(Q)=f_{\ell\ell',P}(Q)\left (\prod _{K\in \textrm{Ker}\phi}\frac{l_{K,P}(Q)}{v_{K+P}(Q)}\right )^{\ell'}.
\end{eqnarray*}
We raise this equality to the power $\frac{q-1}{\ell'}$ and get the announced result.
\end{proof}

\begin{proposition}\label{descendancyProof}
Let $E$ be an elliptic curve defined a finite field $\F_q$ and assume that $E[\ell^{\infty}](\mathbb{F}_q)$ is isomorphic
to $\Z/\ell^{n_1}\Z\times \Z/\ell^{n_2}\Z$ (with $n_1 \geq n_2\geq 1$). Suppose that there is a $\ell^{n_2}$-torsion point $P$ such that
$T_{\ell^{n_2}}(P,P)$ is a primitive $\ell^{n_2}$-th root of unity. Then the $\ell$-isogeny whose kernel is generated by $\ell^{n_2-1}P$ is descending.
Moreover, the curve $E$ does not lie above the first stability level of the corresponding $\ell$-volcano.
\end{proposition}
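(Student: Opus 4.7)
The plan is to push the self-pairing of $P$ through the isogeny $\phi$ via Lemma~\ref{ImportantLemma}(b), use bilinearity of the Tate pairing to prove that $\phi(P)$ is not $\ell$-divisible in $E'(\F_q)$, and then read off the ambient volcano position from the $\ell$-Sylow structure of $E'(\F_q)$.

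I would first apply Lemma~\ref{ImportantLemma}(b) with $\ell' = \ell^{n_2-1}$ and $Q = P$ (reading the left-hand side there as $T_{\ell'}$, which is what its proof actually establishes). This yields
\[
T_{\ell^{n_2-1}}(\phi(P),\phi(P)) \;=\; T_{\ell^{n_2}}(P,P)^{\ell},
\]
a primitive $\ell^{n_2-1}$-th root of unity. Suppose for contradiction that $\phi(P) = \ell R$ for some $R \in E'(\F_q)$; then $R$ must have order $\ell^{n_2}$, and bilinearity in the second slot of the Tate pairing gives
\[
T_{\ell^{n_2-1}}(\phi(P),\phi(P)) \;=\; T_{\ell^{n_2-1}}(\ell R, \ell R) \;=\; T_{\ell^{n_2-1}}(\ell R, R)^{\ell} \;\in\; \mu_{\ell^{n_2-2}},
\]
contradicting the displayed primitivity whenever $n_2 \geq 2$. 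Hence $\phi(P) \notin \ell E'(\F_q)$.

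For the structural step, write $E'(\F_q)[\ell^\infty] \simeq \Z/\ell^{m_1}\Z \times \Z/\ell^{m_2}\Z$ with $m_1 \ge m_2$. Since $\#E(\F_q)=\#E'(\F_q)$ forces $m_1 + m_2 = n_1 + n_2$, we have $m_1 \ge n_2$ automatically. If moreover $m_2 \ge n_2$, a direct computation in $\Z/\ell^{m_1}\Z \times \Z/\ell^{m_2}\Z$ shows that every $\ell^{n_2-1}$-torsion element of $E'(\F_q)$ is $\ell$-divisible there, so $\phi(P) \in \ell E'(\F_q)$, contradicting the previous paragraph. Therefore $m_2 = n_2 - 1$. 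Comparing with the discussion of the evolution of $(n_1,n_2)$ along an $\ell$-volcano in Section~\ref{IsogenyVolcanoes}, a strict drop of the smaller invariant factor happens, among the four configurations (horizontal; ascending; descending with $E$ above the first stability level; descending with $E$ at or below it), in exactly one of them: the last. This simultaneously yields both conclusions of the proposition. The main obstacle is precisely this last translation, which requires pinning down in all four cases the exact relationship between $m_2$, the direction of $\phi$, and the position of $E$ relative to the first stability level.

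Finally, the case $n_2 = 1$ escapes the argument above because $\ell' = 1$ makes Lemma~\ref{ImportantLemma}(b) tautological and $\phi(P) = 0$. I would dispatch it by lifting: Proposition~\ref{ChangeStructure} supplies a point $\tilde P \in E(\F_{q^\ell})$ with $\ell \tilde P = P$ and order $\ell^2$, and Lemma~\ref{TatePairing}(b) gives $T_{\ell^2}(\tilde P, \tilde P)^\ell = T_{\ell}(P,P)$, so $T_{\ell^2}(\tilde P,\tilde P)$ is a primitive $\ell^2$-th root of unity over $\F_{q^\ell}$. Applying the already-established case $n_2 = 2$ to $E/\F_{q^\ell}$, whose $\ell$-volcano by Proposition~\ref{shapeVolcano} differs from the one over $\F_q$ only by an extra level at the floor with the same stability index, transfers both conclusions back to $\F_q$. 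For $\ell = 2$ the same strategy applies using Remark~\ref{extVolcano2} and Lemma~\ref{TatePairing}(c); the extra work there is to verify that the correction term involving a $2$-torsion point does not spoil the primitivity of the lifted self-pairing.
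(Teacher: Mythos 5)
Your route is genuinely different from the paper's, and it is sound in its main range, but it is longer and leaves one subcase open. The paper argues by contradiction directly on $E$: if the isogeny $I_1$ with kernel $\langle\ell^{n_2-1}P\rangle$ were ascending or horizontal (or descending with $E$ strictly above the first stability level), then $E_1$ would have all of its $\ell^{n_2}$-torsion rational; since the dual $\hat I_1$ has kernel $\langle \ell^{n_2-1}I_1(Q)\rangle$, one gets $P=\hat I_1(P_1)$ with $P_1\in E_1[\ell^{n_2}](\F_q)$, and Lemma~\ref{ImportantLemma}(a) forces $T_{\ell^{n_2}}(P,P)=T_{\ell^{n_2}}(P_1,P_1)^{\ell}\in\mu_{\ell^{n_2-1}}$, contradicting primitivity. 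That single computation yields both conclusions at once, uniformly in $n_2\geq 1$ and in $\ell$ (including $\ell=2$), with no base-field extension and no analysis of the group structure of the target curve. Your pull-back-free alternative --- Lemma~\ref{ImportantLemma}(b) read with $T_{\ell'}$ on the left (which is indeed what its proof establishes), non-divisibility of $\phi(P)$, then $m_2<n_2$ --- does close for $n_2\geq 2$: the ``translation'' you flag as the main obstacle is exactly the content of Section~\ref{IsogenyVolcanoes} (the structure is constant on each level, and the smaller invariant factor is constant above the first stability level and drops by one per level at or below it), and since only $m_2<n_2$ is needed, the asserted equality $m_2=n_2-1$ (which would additionally require $E'[\ell^{n_2-1}]\subseteq\phi(E[\ell^{n_2}](\F_q))$) is harmless.

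The cost of your approach is the case $n_2=1$, which the paper's proof absorbs with no extra work. For $\ell\geq 3$ your lift via Proposition~\ref{ChangeStructure} and Lemma~\ref{TatePairing}(b) works (note that $\tilde P\notin E(\F_q)$, as a primitive self-pairing forbids $\ell$-divisibility of $P$ over $\F_q$), and the transfer back to $\F_q$ is legitimate because endomorphism rings do not change under base extension and the first stability level is preserved, since $h$ grows by $1$ while $v_\ell(\#E(\F_{q^\ell}))=v_\ell(\#E(\F_q))+2$; that one-line check should be written out rather than asserted. For $\ell=2$, $n_2=1$, however, you leave a genuine gap: Lemma~\ref{TatePairing}(c) only gives $T_{4}(\tilde P,\tilde P)^{2}=T_{2}(P,P)\,T_{2}(P,T)$, and the correction term can equal $-1$ and destroy primitivity, while Remark~\ref{extVolcano2} shows $\Delta$ need not be $1$; so ``the same strategy applies'' is an assertion, not a proof (compare how the paper itself must detour through a curve on the floor in Proposition~\ref{existenceNondegenerate}(b) for this very configuration). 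If you want your argument to cover the full statement, either settle that subcase explicitly or switch to the paper's dual-isogeny argument, which avoids all case distinctions.
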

\begin{proof}
Let $I_1:E\rightarrow E_1$ be the isogeny whose kernel is generated by $\ell^{n_2-1}P$ and suppose this isogeny is ascending or horizontal.
This means that $E_1[\ell^{n_2}]$ is defined over $\F_q$. Take $Q$
another $\ell^{n_2}$-torsion point on $E$, such that
$E[\ell^{n_2}]=\langle P,Q \rangle$ and denote
by $Q_1=I_1(Q)$. One can easily check that the dual of $I_1$ has kernel generated by $\ell^{n_2-1}Q_1$. It follows that there is a point $P_1\in E_1[\ell^{n_2}]$ such that $P=\hat{I_1}(P_1)$. By Lemma~\ref{ImportantLemma} this means that $T_{\ell}(P,P)\in \mu_{\ell^{n_2-1}}$, which is false. This proves not only that the isogeny is descending, but also that the structure of the $\ell$-torsion is different at the level of $E_1$. Hence $E$ cannot be above the stability level.
\end{proof}

\begin{proposition}\label{existenceNondegenerate}
Let $E/\F_q$ be a curve which lies in an $\ell$-volcano and on the first stability level. Suppose $E[\ell^{\infty}](\F_q)\simeq \Z/\ell^{n_1}\Z\times \Z/\ell^{n_2}\Z$, $n_1\geq n_2\geq 1$.
\begin{itemize} \item[(a)] Suppose $\ell \geq 3$. Then there is at least one $\ell^{n_2}$-torsion point $E(\F_q)$ with primitive self-pairing.
\item[(b)] If $\ell=2$ and the height of the volcano is greater than 1, then there is at least one $\ell^{n_2}$-torsion point $E(\F_q)$ with primitive self-pairing.
\end{itemize}
\end{proposition}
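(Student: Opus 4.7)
The plan is to work on the floor of the $\ell$-volcano, where the $\ell$-Sylow is cyclic so that pairings are easy to control, and then pull a primitive self-pairing back to $E$ through the dual of a composition of descending isogenies. Matching the formula $L=h-v_\ell(\#E(\F_q))/2$ against the descending-structure sequence $(n_1+k,n_2-k)$ forces $n_1=n_2=n$ at first stability, so $E[\ell^n]\subset E(\F_q)$ and $E$ sits exactly $n$ levels above a floor curve $E_f$ with $E_f(\F_q)_{\ell^\infty}\simeq \Z/\ell^{2n}\Z$.

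I would fix a descending chain $E=E_0\to\cdots\to E_n=E_f$ and let $\Psi$ denote its composition. A short induction on the length of the chain (at each step lifting a generator of the next $\ell$-isogeny kernel to produce an element whose order multiplies by $\ell$) shows that $\ker\Psi$ is cyclic of order $\ell^n$, and since $E[\ell^n]\subset E(\F_q)$ we have $\ker\Psi\subset E(\F_q)$. Consequently $\ker\hat\Psi=\Psi(E[\ell^n])$ is an order-$\ell^n$ subgroup of $E_f(\F_q)$, but in $E_f(\F_q)_{\ell^\infty}\simeq \Z/\ell^{2n}\Z$ there is only one such subgroup, $\langle\ell^n G\rangle$ for any generator $G$. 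Hence $P:=\hat\Psi(G)$ lies in $E(\F_q)$ and has order exactly $\ell^n$; this is my candidate for the statement.

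The non-degeneracy on the floor is nearly immediate: using the compatibility $T_{\ell^{2n}}^{E_f}(\ell^{2n-1}G,Q)=T_\ell^{E_f}(\ell^{2n-1}G,Q)$ (valid because $\ell^{2n-1}G$ has order $\ell$, so $f_{\ell^{2n},\ell^{2n-1}G}=f_{\ell,\ell^{2n-1}G}^{\ell^{2n-1}}$ up to a constant), bilinearity gives $T_{\ell^{2n}}^{E_f}(G,G)^{\ell^{2n-1}}=T_\ell^{E_f}(\ell^{2n-1}G,G)$. The right-hand side is a primitive $\ell$-th root of unity by non-degeneracy of the Tate pairing $E_f[\ell](\F_q)\times E_f(\F_q)/\ell E_f(\F_q)\to\mu_\ell$, since both of these groups are copies of $\Z/\ell\Z$ generated respectively by $\ell^{2n-1}G$ and by $G\bmod \ell E_f(\F_q)$. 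It follows that $T_{\ell^{2n}}^{E_f}(G,G)$ is itself a primitive $\ell^{2n}$-th root of unity.

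The main technical step, which I expect to be the bulk of the proof, is to generalize Lemma~\ref{ImportantLemma}(b) from $\ell$-isogenies to the degree-$\ell^n$ isogeny $\hat\Psi$ with cyclic kernel $\langle\ell^n G\rangle$, obtaining the identity $T_{\ell^n}^E(\hat\Psi(G),\hat\Psi(G))=T_{\ell^{2n}}^{E_f}(G,G)^{\ell^n}$. The Miller-function computation from the proof of Lemma~\ref{ImportantLemma}(b) carries over: one writes $\hat\Psi^*(f_{\ell^n,\hat\Psi(G)})$ as $f_{\ell^{2n},G}$ times an $\ell^n$-th power of a rational function, which disappears after raising to $(q-1)/\ell^n$. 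The only nontrivial ingredient is the principality of an auxiliary divisor, which reduces to $\sum_{K\in\ker\hat\Psi}K=\tfrac{\ell^n(\ell^n-1)}{2}\cdot\ell^n G=O$; this is where $\ell$ being odd is used, since then $(\ell^n-1)/2$ is an integer and the sum becomes a multiple of $\ell^{2n}G=O$. Combining with the primitivity of $T_{\ell^{2n}}^{E_f}(G,G)$ gives $T_{\ell^n}^E(P,P)$ primitive of order $\ell^n$, proving (a). Part (b) follows the same outline for $\ell=2$, except that $\sum_K K$ is now a nontrivial $2$-torsion point, yielding a correction term analogous to the extra factor $T_{2^n}(P,T)$ appearing in Lemma~\ref{TatePairing}(c); the hypothesis $h>1$ enters via Remark~\ref{extVolcano2} to guarantee both the expected shape of the descending chain and that this correction lies in $\mu_{2^{n-1}}$, so that it does not disturb the primitivity of the self-pairing on $E$.
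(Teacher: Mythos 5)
Your route is genuinely different from the paper's (which descends only \emph{one} level, transfers primitivity through the dual of that single step via Lemma~\ref{ImportantLemma}, invokes non-degeneracy of $T_{\ell^{n_2}}$ on $E$ itself, and handles the base case $n_2=1$ by passing to $\F_{q^{\ell}}$). Unfortunately your version has two genuine gaps. First, the opening structural claim is false: the first stability level does \emph{not} force $n_1=n_2$. For a regular volcano the first stability level is the crater, where $n_2=h$ and $n_1=v_{\ell}(\#E(\F_q))-h$; whenever $h<v_{\ell}(\#E(\F_q))/2$ (e.g.\ $h=1$, $v_{\ell}(\#E(\F_q))=3$, crater structure $\Z/\ell^{2}\Z\times\Z/\ell\Z$) one has $n_1>n_2$. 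In that situation your construction breaks down in the worst possible way: the floor lies only $n_2$ levels down with cyclic Sylow $\Z/\ell^{n_1+n_2}\Z$, the point $\hat\Psi(G)$ has order $\ell^{n_1}$ rather than $\ell^{n_2}$, and the $\ell^{n_2}$-torsion point $\ell^{n_1-n_2}\hat\Psi(G)$ it yields is precisely the one spanning the ascending/horizontal kernel, which by the remark following Proposition~\ref{SmartNavigating} has \emph{degenerate} self-pairing. So in the $n_1>n_2$ case your candidate is exactly the wrong point, and a different argument (as in the paper's Case~1, which keeps a point of order $\ell^{n_1}$ and a point of order $\ell^{n_2}$ in play simultaneously) is needed.

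Second, even when $n_1=n_2=n$, your floor computation uses the reduced pairing $T_{\ell^{2n}}$ over $\F_q$ and asserts that $T_{\ell^{2n}}(G,G)$ is a primitive $\ell^{2n}$-th root of unity. This requires $\ell^{2n}\mid q-1$, but the hypotheses only give $\ell^{n}\mid q-1$ (from $N\mid q-1$ with $v_{\ell}(N)=n$); if $v_{\ell}(q-1)<2n$ the exponent $(q-1)/\ell^{2n}$ is not an integer and no element of $\F_q^*$ has order $\ell^{2n}$, so the intermediate steps of your iterated pullback (which need $T_{\ell^{2n-j}}$ for $j=0,\dots,n$) are not available. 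The paper is careful to stay with moduli dividing $q-1$ and to buy one extra factor of $\ell$ by base-changing to $\F_{q^{\ell}}$ (Lemma~\ref{TatePairing}, Proposition~\ref{shapeVolcano}); your argument would need the same device, but then the clean "cyclic floor" picture over $\F_q$ is lost. The remaining ingredients are fine or repairable (the cyclicity of $\ker\Psi$, the identification $\ker\hat\Psi=\langle\ell^nG\rangle$, the compatibility $T_{\ell^{2n}}(G,G)^{\ell^{2n-1}}=T_{\ell}(\ell^{2n-1}G,G)$, and the generalization of Lemma~\ref{ImportantLemma}(b) to cyclic degree-$\ell^n$ isogenies), but part (b) as written is only a sketch: the claim that the $2$-torsion correction term lands in $\mu_{2^{n-1}}$ and does not disturb primitivity is exactly the delicate point and is not argued.
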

\begin{proof} (a) Let $P$ be a $\ell^{n_1}$-torsion point and $Q$ be a $\ell^{n_2}$-torsion point such that $\{P,Q\}$ generates $E[\ell^{\infty}](\F_q)$.\\
\textsl{Case 1.} Suppose $n_1\geq n_2\geq 2$. Let $E \stackrel{ I_1 }{\longrightarrow} E_1$ be a descending $\ell$-isogeny and denote by $P_1$ and $Q_1$ the $\ell^{n_1+1}$ and $\ell^{n_2-1}$-torsion points generating $E_1[\ell^{\infty}](\F_p)$.
Moreover, without loss of generality, we may  assume that $I_1(P)=\ell P_1$ and $I_1(Q)=Q_1$.
If $T_{\ell^{n_2-1}}(Q_1,Q_1)$ is a primitive $\ell^{n_2-1}$-th root of unity, $T_{\ell^{n_2}}(Q,Q)$ is a primitive $\ell^{n_2}$-th root of unity
by Lemma~\ref{ImportantLemma}. If not, from the non-degeneration of the pairing, we deduce that $T_{\ell^{n_2-1}}(Q_1,P_1)$ is a primitive $\ell^{n_2-1}$-th
root of unity, which means that $T_{\ell^{n_2-1}}(Q_1,\ell P_1)$ is a $\ell^{n_2-2}$-th primitive root of unity.
By applying Lemma~\ref{ImportantLemma}, we get $T_{\ell^{n_2}}(Q,P)\in \mu_{\ell^{n_2-1}}$ at best. It follows that $T_{\ell^{n_2}}(Q,Q)\in \mu_{\ell^{n_2}}$
by the non-degeneracy of the pairing.\\
\textsl{Case 2.} If $n_2=1$, then consider the volcano defined over the extension field $\F_{q^{\ell}}$. There is a $\ell^2$-torsion point $\tilde{Q}\in E(\F_{q^{\ell}})$ with $Q=\ell\tilde{Q}$. We obviously have $\ell^2|q^{\ell}-1$ and from Lemma~\ref{TatePairing}, we get $T_{\ell^2}(\tilde{P},\tilde{P})^{\ell}=T_{\ell}(P,P)$.
By applying Case 1, we get that $T_{{\ell}^2}(\tilde{P},\tilde{P})$ is a primitive $\ell^2$-th root of unity, so $T_{\ell}(P,P)$ is a primitive $\ell$-th root of unity.\\
(b) If $n_2>1$, the proof is similar to that of (a) Case 1. Suppose now that $n_2=1$. Since $4|\#E(\F_q)$, we have $q+1-t\equiv 0\,\, \textrm{mod}\,\,4$. Then $t^2-4q\equiv (q-1)^2\,\,\textrm{mod}\,\,4$. We deduce that $E$ lies on a $2$-volcano with height greater than $1$ if and only if $q\equiv 1 \,\,\textrm{mod}\,\,4$.
Let $E'$ be a curve on the floor of the $\ell$-volcano such that there is a $2$-ascending isogeny $I:E'\rightarrow E$. The fact that $4|q-1$ implies that the $4$-th Tate pairing is well-defined over $\F_q$ and non-degenerate. We have $E'[2^{\infty}](\F_q)\simeq \Z/4\Z$ and thus there is a point $P\in E'[4](\F_q)$ such that $T_4(P,P)\in \mu_4^{*}$ and that $I(2P)=0$. By applying Lemma~\ref{ImportantLemma}, we get that $$T_2(I(P),I(P))\in \mu_2^{*}.$$
\end{proof}


\noindent
We now make use of a result on the representation of ideal classes of orders in imaginary quadratic fields. This is Corollary 7.17 from~\cite{Cox}.
\begin{lemma}\label{CoxLemma}
Let $\mathcal{O}$ be an order in an imaginary quadratic field. Given a
nonzero integer $M$, then every ideal class in
$\textrm{Cl}(\mathcal{O})$ contains a proper $\mathcal{O}$-ideal whose
norm is relatively prime to $M$.
\end{lemma}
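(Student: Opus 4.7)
The strategy is to reduce the question to a statement about binary quadratic forms.

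First I would observe that since $\mathrm{Cl}(\mathcal{O})$ is a group, it suffices to produce, for any class, a proper integral representative whose norm is coprime to $M$; applying this to $[\mathfrak{a}]^{-1}$ and inverting brings us back to the class $[\mathfrak{a}]$. Concretely, pick any proper integral $\mathfrak{a}$ in the class; for any nonzero $\alpha\in\mathfrak{a}$ one has $(\alpha)\subset\mathfrak{a}$, and the invertibility of $\mathfrak{a}$ (automatic since $\mathfrak{a}$ is proper) gives $\alpha\mathfrak{a}^{-1}=\mathfrak{c}$ for an integral proper $\mathcal{O}$-ideal $\mathfrak{c}$ of norm $|N_{K/\mathbb{Q}}(\alpha)|/N(\mathfrak{a})$ representing $[\mathfrak{a}]^{-1}$. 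The task thus becomes: find $\alpha\in\mathfrak{a}$ such that $|N_{K/\mathbb{Q}}(\alpha)|/N(\mathfrak{a})$ is coprime to $M$.

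Next I would translate to quadratic forms. Choose a $\Z$-basis $\{e_1,e_2\}$ of $\mathfrak{a}$; the norm form $Q(x_1,x_2)=N_{K/\mathbb{Q}}(x_1 e_1+x_2 e_2)$ is an integral binary quadratic form, and the scaled form $Q/N(\mathfrak{a})$ is still integer-valued. Crucially, the content of $Q/N(\mathfrak{a})$ equals the index of $\mathcal{O}$ in the multiplier ring $\{\beta\in K:\beta\mathfrak{a}\subset\mathfrak{a}\}$, so properness of $\mathfrak{a}$ is equivalent to $Q/N(\mathfrak{a})$ being a primitive integral form. The problem therefore reduces to: any primitive integral binary quadratic form represents an integer coprime to $M$.

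This last step is elementary. For each prime $p\mid M$, primitivity of $Q/N(\mathfrak{a})$ modulo $p$ means the form is not identically zero mod $p$, so there exists $(\bar x_1,\bar x_2)\in\F_p^2$ with $(Q/N(\mathfrak{a}))(\bar x_1,\bar x_2)\not\equiv 0\pmod p$. Gluing these local choices via the Chinese Remainder Theorem yields integers $(x_1,x_2)$ with $Q(x_1,x_2)/N(\mathfrak{a})$ coprime to $M$, and the element $\alpha=x_1 e_1+x_2 e_2$ completes the reduction.

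\textbf{Main obstacle.} The one delicate ingredient is the identification between properness of $\mathfrak{a}$ and primitivity of $Q/N(\mathfrak{a})$, which rests on the classical dictionary between proper $\mathcal{O}$-ideal classes and $\mathrm{SL}_2(\Z)$-equivalence classes of primitive binary quadratic forms of discriminant $\mathrm{disc}(\mathcal{O})$. Once this correspondence is granted, the rest of the argument is a routine combination of invertibility of proper ideals, the factorization $(\alpha)=\mathfrak{a}\cdot\alpha\mathfrak{a}^{-1}$, and the Chinese Remainder Theorem.
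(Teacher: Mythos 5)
Your proof is correct. There is nothing internal to compare it against: the paper does not prove this lemma, it simply quotes it as Corollary 7.17 of Cox, and your argument is essentially the standard one behind that result --- represent the inverse class by $\mathfrak{c}=\alpha\mathfrak{a}^{-1}$ with $N(\mathfrak{c})=|N_{K/\mathbb{Q}}(\alpha)|/N(\mathfrak{a})$, identify properness of $\mathfrak{a}$ with primitivity of the scaled norm form $N_{K/\mathbb{Q}}(x_1e_1+x_2e_2)/N(\mathfrak{a})$, and note that a primitive binary quadratic form is not identically zero modulo any prime $p\mid M$ (its zero locus lies on at most two lines of $\F_p^2$), so the Chinese Remainder Theorem yields a value coprime to $M$. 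The one genuinely nontrivial ingredient, which you flag explicitly, is the form--ideal dictionary showing that the content of the scaled norm form is the index of $\mathcal{O}$ in the multiplier ring of $\mathfrak{a}$; granting that, the remaining steps (invertibility of proper ideals in quadratic orders, multiplicativity of the norm, CRT) are all sound as you use them.
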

\begin{proposition}\label{Constancy}
We use the notations and assumptions from Proposition~\ref{DefinitionVolcan}. Furthermore, we assume that for all curves $E_i$ lying at a fixed level $i$ in $V$ the curve structure is $\Z/\ell^{n_1}\Z \times \Z/\ell^{n_2}\Z$, with $n_1\geq n_2\geq 1$. The value of $N_{E_i,\ell^{n_2}}$, the number of zeros of the polynomial defined at~\eqref{PairingPolynomial}, is constant for all curves lying at level $i$ in the volcano.
\end{proposition}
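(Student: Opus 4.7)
The plan is to use the transitive action of a subgroup of the class group on the level-$i$ curves of $V$, then invoke Cox's lemma to pick an ideal representative of norm coprime to $\ell$, and finally transport the pairing polynomial through the resulting isogeny via Lemma~\ref{ImportantLemma}(a). The key observation is that an isogeny of degree coprime to $\ell$ acts on the Tate pairing by raising to a unit modulo $\ell^{n_2}$, and multiplication by a unit cannot change the projective zero locus of a quadratic form over $\F_\ell$.

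Fix two curves $E$ and $E'$ at level $i$ of $V$. Since they share the endomorphism ring $\mathcal{O}_{d_i}$ and lie in the same volcano, standard volcano theory writes $E' = \hat{\mathfrak{a}} * E$ for some ideal class $[\mathfrak{a}]$ in the subgroup of $\mathrm{Cl}(\mathcal{O}_{d_i})$ that stabilises $V$ (at the crater this is the cyclic subgroup generated by a prime above $\ell$; at deeper levels one lifts this action through the unique ascending path from each level-$i$ vertex to the crater). Applying Lemma~\ref{CoxLemma} with $M = \ell$, I replace $\mathfrak{a}$ by a representative of its class with norm coprime to $\ell$, and let $\phi : E \to E'$ denote the corresponding isogeny of degree $N(\mathfrak{a})$. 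Because $\gcd(N(\mathfrak{a}), \ell) = 1$, $\phi$ is étale at $\ell$ and induces an $\F_\ell$-linear isomorphism $\phi : E[\ell^{n_2}] \xrightarrow{\sim} E'[\ell^{n_2}]$, which is $\F_q$-rational since the full $\ell^{n_2}$-torsion is rational under the hypothesis on the group structure.

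Next I compare the pairing polynomials through $\phi$. Fix an $\F_q$-rational basis $\{P, Q\}$ of $E[\ell^{n_2}]$ and take $\{\phi(P), \phi(Q)\}$ as a basis of $E'[\ell^{n_2}]$. Applying Lemma~\ref{ImportantLemma}(a) to $T_{\ell^{n_2}}(P,P)$, $T_{\ell^{n_2}}(P,Q)$, $T_{\ell^{n_2}}(Q,P)$ and $T_{\ell^{n_2}}(Q,Q)$, then combining to form the symmetric pairing, gives $S(\phi(P),\phi(P)) = S(P,P)^{N(\mathfrak{a})}$ and analogously for the $(P,Q)$ and $(Q,Q)$ entries. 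Taking discrete logarithms and substituting into~\eqref{PairingPolynomial} yields
\[
\mathcal{P}_{E',\ell^{n_2}}(a,b) \equiv N(\mathfrak{a}) \cdot \mathcal{P}_{E,\ell^{n_2}}(a,b) \pmod{\ell^{n_2}}.
\]
Since $N(\mathfrak{a})$ is a unit modulo every power of $\ell$, this congruence forces $k(E') = k(E)$, and after dividing by $\ell^{n_2 - k(E) - 1}$ and reducing modulo $\ell$ the two polynomials differ by a single nonzero scalar in $\F_\ell^\times$. Scaling by a unit preserves the set of homogeneous zeros in $\mathbb{P}^1(\F_\ell)$, so $N_{E',\ell^{n_2}} = N_{E,\ell^{n_2}}$.

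The main obstacle I anticipate is the first step: precisely identifying the subgroup of $\mathrm{Cl}(\mathcal{O}_{d_i})$ that stabilises $V$ and acts transitively on $V_i$. The full class group acts transitively on $\Elld_{d_i}(\F_q)$ but generally shuffles distinct volcanoes, so one must restrict to the appropriate stabiliser. At the crater this restriction is classical, and at deeper levels it follows from the bijection between ascending paths in $V$ and chains of $\ell$-isogenies; once this is in hand, the remaining degree-and-pairing manipulation is routine because Cox's lemma supplies a norm-coprime-to-$\ell$ representative inside whichever subgroup is required.
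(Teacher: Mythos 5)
Your proof is correct and follows essentially the same route as the paper: transitivity of the class-group action on $\Elld_{d_i}(\F_q)$, Lemma~\ref{CoxLemma} to obtain an isogeny of degree prime to $\ell$, and Lemma~\ref{ImportantLemma}(a) to transport the pairing polynomial, which is only rescaled by a unit and hence has the same number of homogeneous zeros (the paper states the two polynomials are equal, glossing over the scalar $N(\mathfrak{a})$ that you track more carefully). The obstacle you flag at the end is not actually needed: since both curves lie in $\Elld_{d_i}(\F_q)$, plain transitivity of the full class group already supplies an ideal class sending one to the other, and the corresponding isogeny has no need to respect the volcano $V$, so no stabiliser subgroup has to be identified.
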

\begin{proof} Let $E_1$ and $E_2$ be two curves lying at level $i$ in the volcano $V$. Then by Proposition~\ref{DefinitionVolcan}
they both have endomorphism ring isomorphic to some order $\mathcal{O}_{d_i}$. Now by taking into account the fact that the action of $\textrm{Cl}(\mathcal{O}_{d_i})$ on $\Elld_{d_i}(\F_q)$ is transitive, we consider an isogeny $\phi:E_1\rightarrow E_2$ of degree $\ell_1$. By applying Lemma~\ref{CoxLemma}, we may assume that $(\ell_1,\ell)=1$. Take now $P$ and $Q$ two independent $\ell^{n_2}$-torsion points on $E_1$ and denote by $\PP_{E_1,\ell^{n_2}}$ the quadratic polynomial corresponding to the $\ell^{n_2}$-torsion on $E_1$ as in~\ref{PairingPolynomial}.
We use Lemma~\ref{ImportantLemma} to compute $S(\phi(P),\phi(P))$, $S(\phi(P),\phi(Q))$ and $S(\phi(Q),\phi(Q))$ and deduce that a polynomial $\PP_{E_2,\ell^{n_2}}(a,b)$ on the curve $E_2$ computed from $\phi(P)$ and $\phi(Q)$ is such that
\begin{eqnarray*}
\PP_{E_1,\ell^{n_2}}(a,b)=\PP_{E_2,\ell^{n_2}}(a,b).
\end{eqnarray*}
This means that $N_{E_1,\ell^{n_2}}$ and $N_{E_2,\ell^{n_2}}$ coincide, which concludes the proof.
Moreover, we have showed that the value of $k(E_1)=k(E_2)$.
\end{proof}

\begin{proposition}\label{SmartNavigating}
Let $E$ be an elliptic curve defined a finite field $\F_q$ and let $E[\ell^{\infty}](\mathbb{F}_q)$ be isomorphic to $\Z/\ell^{n_1}\Z\times \Z/\ell^{n_2}\Z$ with $n_1 \geq n_2\geq 1$. Suppose $N_{E,\ell^{n_2}}\in \{1,2\}$ and let $P$ be a $\ell^{n_2}$-torsion point with degenerate self-pairing.
Then the $\ell$-isogeny whose kernel is generated by $\ell^{n_2-1}P$ is either ascending or horizontal. Moreover, for any $\ell^{n_2}$-torsion point $Q$ whose self-pairing is non-degenerate, the isogeny with kernel spanned by $\ell^{n_2-1}Q$ is descending.
\end{proposition}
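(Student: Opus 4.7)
The plan is to adapt the argument of Proposition~\ref{descendancyProof} so that it detects degeneracy of the self-pairing (rather than just non-primitivity), and then to deduce the first assertion from the second by a counting argument.

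First I would prove the second assertion (non-degenerate $\Rightarrow$ descending) by contraposition. Assume $I\colon E\to E_1$, with $\ker I=\langle\ell^{n_2-1}Q\rangle$, is ascending or horizontal, and show $T_{\ell^{n_2}}(Q,Q)\in\mu_{\ell^{k(E)}}$. Following the setup of Proposition~\ref{descendancyProof}, pick $P\in E$ so that $\{P,Q\}$ generates $E[\ell^{n_2}]$, set $P_1=I(P)$, note that $\ker\hat I=\langle\ell^{n_2-1}P_1\rangle$, and choose a preimage $Q_1\in E_1(\F_q)$ of $Q$ under $\hat I$ (rescaling by a unit if necessary).

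In the horizontal case $E_1$ has the same $\ell$-Sylow structure as $E$, so $Q_1\in E_1[\ell^{n_2}]$, and Lemma~\ref{ImportantLemma}(a) applied to $\hat I$ yields
\[
T_{\ell^{n_2}}(Q,Q)=T_{\ell^{n_2}}(Q_1,Q_1)^\ell.
\]
Since $E$ and $E_1$ lie at the same level of the volcano, Proposition~\ref{Constancy} gives $k(E_1)=k(E)$; hence $T_{\ell^{n_2}}(Q_1,Q_1)\in\mu_{\ell^{k(E)+1}}$ and the right-hand side lies in $\mu_{\ell^{k(E)}}$, contradicting non-degeneracy. In the ascending case $E_1$ has structure $\Z/\ell^{n_1-1}\Z\times\Z/\ell^{n_2+1}\Z$ and $E_1[\ell^{n_2+1}]\subset E_1(\F_q)$; the preimage $Q_1$ of $Q$ may be taken in $E_1[\ell^{n_2+1}]$, and Lemma~\ref{ImportantLemma}(b) applied to $\hat I$ gives instead
\[
T_{\ell^{n_2}}(Q,Q)=T_{\ell^{n_2+1}}(Q_1,Q_1)^\ell.
\]
Using Lemma~\ref{TatePairing}(a) to relate $T_{\ell^{n_2+1}}$ on $E_1$ to the pairings on a curve at the level of $E$, together with Proposition~\ref{Constancy}, one again concludes $T_{\ell^{n_2}}(Q,Q)\in\mu_{\ell^{k(E)}}$, the required contradiction.

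For the first assertion I would argue by counting. By the second assertion every non-degenerate direction is a descending kernel, so $E$ has at least $\ell+1-N_{E,\ell^{n_2}}$ descending $\ell$-isogenies. Proposition~\ref{DefinitionVolcan} combined with the position of $E$ on its volcano forces the number of ascending or horizontal $\ell$-isogenies from $E$ to equal $1$ (if $E$ lies below the crater, or on a crater where $\ell$ is ramified) or $2$ (if $E$ lies on a crater where $\ell$ splits). Combining these counts with the hypothesis $N_{E,\ell^{n_2}}\in\{1,2\}$ and the inclusion ``ascending/horizontal $\subseteq$ degenerate'' established above, equality is forced, and the $N_{E,\ell^{n_2}}$ degenerate directions coincide exactly with the ascending or horizontal ones.

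The main obstacle is the ascending case of the second step: $E_1$ lies one level above $E$ on the volcano, so Proposition~\ref{Constancy} does not apply directly to compare $k(E_1)$ and $k(E)$; one must organize the passage between the pairing polynomials $\PP_{E,\ell^{n_2}}$ and $\PP_{E_1,\ell^{n_2+1}}$ via Lemma~\ref{TatePairing}(a), being careful with the orders of the chosen preimages under $\hat I$.
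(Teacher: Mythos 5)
Your overall strategy (contraposition plus counting) does not close the statement, and the two places where it fails are precisely where the paper's proof does its real work. First, the ascending branch of your contrapositive cannot be set up as described. If $\ker I=\langle \ell^{n_2-1}Q\rangle$ and $z$ is any preimage of $Q$ under $\hat I$, then $\ell z=I(\hat I(z))=I(Q)$, which has order $\ell^{n_2-1}$; hence every preimage of $Q$ has order exactly $\ell^{n_2}$, so there is no $Q_1\in E_1[\ell^{n_2+1}]$ with $\hat I(Q_1)=Q$, and the hypothesis of Lemma~\ref{ImportantLemma}(b) for $\hat I$ (namely $\ker\hat I=\langle\ell^{n_2}Q_1\rangle$) can never be met; the identity $T_{\ell^{n_2}}(Q,Q)=T_{\ell^{n_2+1}}(Q_1,Q_1)^{\ell}$ therefore has no valid instance. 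Moreover the structure $\Z/\ell^{n_1-1}\Z\times\Z/\ell^{n_2+1}\Z$ for the ascending target only holds below the first stability level. The obstacle you yourself flag — comparing $k(E_1)$ with $k(E)$ across one level, since Proposition~\ref{Constancy} is only a within-level statement — is exactly the missing content, and you cannot borrow it from Theorem~\ref{classInvariant}: that theorem's proof uses the present proposition (it takes for granted that a degenerate point spans the kernel of an ascending isogeny), so the appeal would be circular.

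Second, even granting ``ascending/horizontal $\Rightarrow$ degenerate'', your counting does not force the first assertion. You need the number of ascending/horizontal isogenies to equal $N_{E,\ell^{n_2}}$, but the inclusion only gives an inequality in one direction: if $N=2$ while $E$ lies below the crater (a single ascending isogeny), or if $E$ sat on a crater with $\ell$ inert (zero ascending/horizontal isogenies, a case your list omits), a degenerate direction could a priori still be descending, and nothing in your argument excludes this. The paper proceeds differently and avoids both problems: for a degenerate point $P$ and a non-degenerate point $Q$ it pushes the pairings forward through the two isogenies $I_1$, $I_2$ (using Lemmas~\ref{TatePairing} and~\ref{ImportantLemma} in the direction $E\to E_i$, where the kernel condition of Lemma~\ref{ImportantLemma}(b) is satisfied by construction), reads off that the image curves carry different self-pairing profiles and hence lie at different levels, and then uses the fact that at least $\ell-1$ of the $\ell+1$ directions are descending while at most two are degenerate to conclude that the non-degenerate directions are the descending ones and the degenerate ones are ascending/horizontal. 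It also treats $k(E)=0$ separately (base change to $\F_{q^{\ell}}$, the case $n_2=1$ via Proposition~\ref{descendancyProof}, and $\ell=2$ inside Theorem~\ref{classInvariant}), a case your ascending branch would also need but does not address.
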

\begin{proof}
\textit{Case 1.}
Suppose $T_{\ell^{n_2}}(P,P)\in \mu_{\ell^{k(E)}}$, $k(E)\geq 1$ and that $T_{\ell^{n_2}}(Q,Q)\in \mu_{\ell^{k(E)+1}}\backslash \mu_{\ell^{k(E)}}$. Denote by $I_1:E\rightarrow E_1$ the isogeny whose kernel
 is generated by $\ell^{n_2-1}P$ and $I_2:E\rightarrow E_2$ the isogeny whose kernel is generated by $\ell^{n_2-1}Q$. By repeatedly applying Lemmas~\ref{TatePairing} and
\ref{ImportantLemma}, we get the following relations for points generating the $\ell^{n_2-1}$-torsion on $E_1$ and $E_2$:
\begin{eqnarray*}
T_{\ell^{n_2-1}}(I_1(P),I_1(P))&\in & \mu_{\ell^{k(E)-1}}  ,\,\,T_{\ell^{n_2-1}}(\ell I_1(Q),\ell I_1(Q))\in \mu_{\ell^{k(E)-2}}\backslash \mu_{\ell^{k(E)-3}}\\
T_{\ell^{n_2-1}}(\ell I_2(P),\ell I_2(P))&\in & \mu_{\ell^{k(E)-3}}  ,\,\,T_{\ell^{n_2-1}}(I_2(Q),I_2(Q))\in \mu_{\ell^{k(E)}}\backslash \mu_{\ell^{k(E)-1}}
\end{eqnarray*}
with the convention that $\mu_{\ell^h}=\emptyset$ whenever $h\leq 0$.
From the relations above, we deduce that on the $\ell$-volcano having $E, E_1$ and $E_2$ as vertices, $E_1$ and $E_2$ do not lie at the same level. Given the fact that there are at least $\ell-1$ descending rational $\ell$-isogenies parting from $E$ and that $Q$ is any of the $\ell-1$ (or more) $\ell^{n_2}$-torsion points with non-degenerate self-pairing, we conclude that $I_1$ is horizontal or ascending and that $I_2$ is descending.\\
\textit{Case 2.} Suppose now that $k(E)=0$. Note that the case $n_2=1$ was already treated in Proposition~\ref{descendancyProof}. If $n_2>1$, we consider the curve $E$ defined over $\F_{q^{\ell}}$. For $\ell>3$, by Lemma~\ref{TatePairing}b we have $k(E)=1$ for points on $E/\F_{q^{\ell}}$, and we may apply \textit{Case} 1. The case $\ell=2$ is treated inside the proof of Theorem~\ref{classInvariant}.
\end{proof}
\noindent
\begin{remark} If $E$ is a curve lying under the first
stability level and that $E[\ell^{\infty}](\F_q)\simeq
\Z/\ell^{n_1}\Z\times \Z/\ell^{n_2}\Z$, with $n_1>n_2$, then it
suffices to find a point $P_1$ of order $\ell^{n_1}$ and the point
$\ell^{n_1-1}P_1$ generates the kernel of a horizontal or ascending
isogeny ($P_1$ has degenerate self-pairing).
\end{remark}

\noindent
\begin{corollary}
Let $E$ be a curve on an $\ell$-volcano such that the polynomial $\PP_{E,\ell^{n_2}}$ is non-zero over $\F_q$. If $\ell$ is split in the maximal order $\mathcal{O}_{d_K}$, then $E$ is on the crater if and only if $N_{E,\ell^{n_2}}$ is $2$.
Otherwise, $\ell$ is inert in $\mathcal{O}_{d_K}$ if and only if $N_{E,\ell^{n_2}}=0$.
\end{corollary}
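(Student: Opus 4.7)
The plan is to establish a bijection between the homogeneous roots of $\PP_{E,\ell^{n_2}}$ and the kernels of the ascending or horizontal rational $\ell$-isogenies from $E$, and then to count these isogenies via Kohel's classification. One direction of the bijection is precisely Proposition~\ref{SmartNavigating}: a homogeneous root $(a_0\!:\!b_0)$ of $\PP_{E,\ell^{n_2}}$ yields an $\ell^{n_2}$-torsion point with degenerate self-pairing, whose $\ell^{n_2-1}$-multiple generates the kernel of an ascending or horizontal isogeny. For the converse, let $I:E\to E'$ be an ascending or horizontal $\ell$-isogeny with $\ker I=\langle R_0\rangle$. Since $I$ is non-descending, the level of $E'$ on the volcano is at least that of $E$, so $E'[\ell^{n_2}]$ is rational. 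A direct computation using $I\circ\hat I=[\ell]$ identifies
\[\hat I(E'[\ell^{n_2}])=\{P\in E[\ell^{n_2}]:\ell^{n_2-1}P\in\langle R_0\rangle\},\]
so every lift $R\in E[\ell^{n_2}]$ of $R_0$ may be written $R=\hat I(R')$ with $R'\in E'[\ell^{n_2}]$; Lemma~\ref{ImportantLemma}(a) applied to $\hat I$ then gives $T_{\ell^{n_2}}(R,R)=T_{\ell^{n_2}}(R',R')^{\ell}\in\mu_{\ell^{n_2-1}}$, so $R$ has degenerate self-pairing and the corresponding class $(a_0\!:\!b_0)$ is a root of $\PP$. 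Consequently $N_{E,\ell^{n_2}}$ equals the number of ascending or horizontal rational $\ell$-isogenies from $E$.

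To count these isogenies I would invoke Kohel's description. If $E$ lies strictly below the crater, exactly one of the $\ell+1$ rational $\ell$-isogenies is ascending and none is horizontal, so $N_{E,\ell^{n_2}}=1$. If $E$ lies on the crater, there is no ascending isogeny and the number of horizontal isogenies equals $\bigl(\tfrac{d_0}{\ell}\bigr)+1$, where $d_0$ is the discriminant of the endomorphism ring shared by crater curves; since the conductor of $\mathcal{O}_{d_0}$ is coprime to $\ell$ by Proposition~\ref{DefinitionVolcan}(a), the Kronecker symbol $\bigl(\tfrac{d_0}{\ell}\bigr)$ agrees with $\bigl(\tfrac{d_K}{\ell}\bigr)$. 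Hence on the crater $N_{E,\ell^{n_2}}$ equals $2$, $1$ or $0$ according as $\ell$ is split, ramified or inert in $\mathcal{O}_{d_K}$.

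Combining the two tables, $N_{E,\ell^{n_2}}=2$ arises precisely for crater curves with $\ell$ split and $N_{E,\ell^{n_2}}=0$ arises precisely for crater curves with $\ell$ inert, while in every other case $N_{E,\ell^{n_2}}=1$. Both equivalences in the corollary follow: under the hypothesis ``$\ell$ split'', the value $N_{E,\ell^{n_2}}=2$ characterises crater curves since non-crater curves give $N_{E,\ell^{n_2}}=1$; under the ``otherwise'' clause ($\ell$ not split), the ramified case always yields $N_{E,\ell^{n_2}}=1$, so the value $N_{E,\ell^{n_2}}=0$ distinguishes the inert case.

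The main obstacle is the converse half of the bijection in the first paragraph, namely producing a root of $\PP$ from a given ascending or horizontal isogeny. The key observations are the exact description of $\hat I(E'[\ell^{n_2}])$ as the preimage of $\langle R_0\rangle$ under multiplication by $\ell^{n_2-1}$ on $E[\ell^{n_2}]$, and the use of Lemma~\ref{ImportantLemma}(a) to transfer self-pairings through $\hat I$; this is a direct generalisation of the computation carried out in the proof of Proposition~\ref{descendancyProof}.
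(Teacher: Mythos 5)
Your overall strategy (identify the roots of $\PP_{E,\ell^{n_2}}$ with the ascending/horizontal directions via Proposition~\ref{SmartNavigating} and its converse, then count those directions with Kohel's classification) is exactly the derivation the paper leaves implicit, since the corollary is stated there without proof. However, your converse step has a genuine gap: from $T_{\ell^{n_2}}(R,R)=T_{\ell^{n_2}}(R',R')^{\ell}\in\mu_{\ell^{n_2-1}}$ you conclude that $R$ has \emph{degenerate} self-pairing, but in the paper's terminology degenerate means $T_{\ell^{n_2}}(R,R)\in\mu_{\ell^{k(E)}}$, and $k(E)$ can be strictly smaller than $n_2-1$ (this happens precisely for curves lying strictly above the first stability level and at or below the second one, e.g.\ crater curves of irregular volcanoes --- curves the corollary must cover, since its only hypothesis is $\PP_{E,\ell^{n_2}}\neq 0$). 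Non-primitivity of the self-pairing does not make $(a_0\!:\!b_0)$ a root of the reduced polynomial $\PP$ in that range. For \emph{horizontal} isogenies the step can be repaired inside the paper's toolkit: by Proposition~\ref{Constancy} the target curve $E'$ has $k(E')=k(E)$, so every self-pairing on $E'$ lies in $\mu_{\ell^{k(E)+1}}$ and pulling back through $\hat I$ lands in $\mu_{\ell^{k(E)}}$, which is what degeneracy requires. For \emph{ascending} isogenies no such one-line repair is available from the results you cite (it would require knowing $k(E')\leq k(E)-1$ for the curve above, which in the paper is only obtained in Theorem~\ref{classInvariant} by \emph{assuming} the ascending kernel comes from a degenerate point), so your claim that $N_{E,\ell^{n_2}}$ always equals the number of ascending-or-horizontal isogenies, in particular that $N=1$ for every curve strictly below the crater, is not established.

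This unproved case is not cosmetic: it is exactly where your final assembly breaks. By your own table, a curve strictly below the crater of an \emph{inert} volcano with $\PP\neq 0$ would have $N_{E,\ell^{n_2}}=1$, which contradicts the direction ``$\ell$ inert $\Rightarrow N_{E,\ell^{n_2}}=0$'' of the second equivalence; your closing paragraph only argues ``$N=0\Rightarrow$ inert'' and silently drops the forward implication. To close the argument you must either show that such curves are excluded by the hypothesis (i.e.\ that $\PP_{E,\ell^{n_2}}$ vanishes identically below the crater in that situation), or show that for them the ascending direction is \emph{not} a root (so $N=0$ there as well); as written, neither is addressed, and the ramified-below-crater case needed for ``not split and $N\neq 0\Rightarrow$ ramified'' suffers from the same unjustified ``ascending $\Rightarrow$ root'' step. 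The crater-only statements ($N=2$ iff crater when $\ell$ splits, $N=0$ on the crater iff $\ell$ inert) do follow once you insert the Proposition~\ref{Constancy} repair sketched above, so the fix should concentrate on the below-crater curves.
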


\noindent
\textbf{Two stability levels.} Remember that in any irregular volcano,
$v_{\ell}(\#E(\F_q))$ is even and the height $h$ of the volcano is
greater than $v_{\ell}(\#E(\F_q))$. Moreover, all curves at
the top of the volcano have  $E[\ell^{\infty}](\F_q)\simeq
\Z/\ell^{n_2}\Z\times \Z/\ell^{n_2}\Z$ with $n_2=\frac{v_{\ell}(\#E(\F_q))}{2}$.  The existence of a
primitive self-pairing of a $\ell^{n_2}$-torsion point on any curve
lying on the first stability level implies that the polynomial $\PP$
is non-zero at every level from the first stability level up to the
level $\max(h+1-2n_2,0)$ (by Lemma~\ref{ImportantLemma}). We call this level \textit{the second level of stability}. This is illustrated in Figure~\ref{LevelVolcano}. When the second stability level of a volcano is $0$, we say that the volcano is \textit{almost regular}.\\
 Consider now $E$ a curve on the second stability level and
$I:E\rightarrow E_1$ an ascending isogeny. Let $P$ be a
$\ell^{n_2}$-torsion point on $E$ and assume that $T_{\ell^{n_2}}(P,P)\in \mu_{\ell}^*$.
We denote by $\bar{P}\in E(\F_{q^{\ell}})\backslash E(\F_q)$ a point such that $\ell \bar{P}=P$.
By Lemma~\ref{TatePairing} we get
$T_{\ell^{n_2+1}}(\bar{P},\bar{P})$ is a primitive $\ell^{2}$-th
root of unity. It follows by Lemma~\ref{ImportantLemma} that
$T_{\ell^{n_2}}(I(P),I(P))$ is a primitive $\ell$-th root of unity. We
deduce that $\PP_{E_1,\ell^{n_2+1}}$ corresponding to
$E_1/\F_{q^{\ell}}$ is non-zero. Applying this reasoning
repeatedly, we conclude that for every curve $E$ above the second
stability level there is an extension field $\F_{q^{\ell^s}}$ such that
the polynomial $\PP_{E,\ell^{n_2+s}}$ associated to the curve defined
over $\F_{q^{\ell^s}}$ is non-zero. We will show that the degree of this extension field characterizes uniquely curves lying on a fixed level of the volcano, above the second stability level. \\

\vspace{0.3 cm}
\noindent
Let $E$ be an elliptic curve. We suppose that
\begin{eqnarray*}
E(\F_q)[\ell^{\infty}]\simeq \Z/\ell^{n_1}\Z\times \Z/\ell^{n_2}\Z.
\end{eqnarray*}
We define $\mathcal{L}_{\ell,E}$ as follows
$$\mathcal{L}_{\ell,E}=
\left \{\begin{array}{l}
n_1,~\mbox{if}~E~\mbox{is under/on the first stability level} \\
\\
k(E)+1,~\mbox{if}~E~\mbox{is above the first stability level and} \\
~~~~~~~~~~~~~~~~~~\mbox{below the second stability level},\\
\\
-s+1,~\mbox{if $E$ is above the second stability level},
\end{array}
\right.
$$
where $s$ is the smallest integer such that the polynomial $\PP$ of the curve $E$ defined over
$\F_{q^{\ell^s}}$ is nonzero.
\begin{theorem}\label{classInvariant}
Let $E$ be an elliptic curve in \rm{Ell}$_t(\F_q)$. Then $\mathcal{L}_{\ell,E}$ is an invariant of the level of the curve in its $\ell$-volcano.
\end{theorem}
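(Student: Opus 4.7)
The proof splits naturally into three cases, one for each clause of the definition of $\mathcal{L}_{\ell,E}$. As a preliminary observation, the first stability level $h-v_{\ell}(\#E(\F_q))/2$ and the second stability level $\max(h+1-2n_2,0)$ depend only on $h$ and $v_{\ell}(\#E(\F_q))$, which are invariants of the whole volcano. Hence the clause that applies to a given curve depends only on its level, and it suffices to check separately, within each clause, that $\mathcal{L}_{\ell,E}$ does not depend on the choice of $E$ at a fixed level.

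\textbf{Case 1 (on or below the first stability level).} Here $\mathcal{L}_{\ell,E}=n_1$. Using the identity $N=\gcd(a-1,g/f)$ from Section~\ref{IsogenyVolcanoes}, we have $n_2=v_\ell(g/f)$, and by Proposition~\ref{DefinitionVolcan}(b) the conductor $f$ of $\mathrm{End}(E)$ is the same for every $E$ at a given level. Thus $n_2$ is constant along the level, and so is $n_1=v_\ell(\#E(\F_q))-n_2$.

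\textbf{Case 2 (above the first stability level, on or below the second).} By the discussion preceding Theorem~\ref{classInvariant}, the $\ell^{\infty}$-group structure here is the crater structure $\Z/\ell^{n_2}\Z\times\Z/\ell^{n_2}\Z$, the same for every $E$ on that level. Proposition~\ref{Constancy} then directly gives that $k(E)$ is constant, so $\mathcal{L}_{\ell,E}=k(E)+1$ is too.

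\textbf{Case 3 (above the second stability level).} I would adapt the proof of Proposition~\ref{Constancy} to the extension $\F_{q^{\ell^s}}$. Let $E_1,E_2$ be at the same level, so $\mathrm{End}(E_1)\simeq\mathrm{End}(E_2)\simeq\mathcal{O}_{d_i}$. By Lemma~\ref{CoxLemma}, choose a horizontal isogeny $\phi:E_1\to E_2$ over $\F_q$ of degree $d$ with $\gcd(d,\ell)=1$. Then $\phi$ is defined over every $\F_{q^{\ell^s}}$ and restricts to an isomorphism on $\ell^{\infty}$-torsion (applying Proposition~\ref{ChangeStructure} iteratively to see that the $\ell^{n_2+s}$-torsion is defined over $\F_{q^{\ell^s}}$ on both curves). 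By Lemma~\ref{ImportantLemma}(a), for any $P,Q\in E_1[\ell^{n_2+s}]$ we have $T_{\ell^{n_2+s}}(\phi(P),\phi(Q))=T_{\ell^{n_2+s}}(P,Q)^d$, and since $d$ is invertible modulo every power of $\ell$, the pairing polynomial satisfies $\PP_{E_2,\ell^{n_2+s}}(a,b)=d\cdot\PP_{E_1,\ell^{n_2+s}}(a,b)$ (computed using $\phi(P),\phi(Q)$ on $E_2$ and $P,Q$ on $E_1$). Therefore $\PP_{E_2,\ell^{n_2+s}}$ vanishes identically on $\F_{q^{\ell^s}}$ if and only if $\PP_{E_1,\ell^{n_2+s}}$ does, and the minimal exponent $s$ that produces a nonzero polynomial agrees on $E_1$ and $E_2$. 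Hence $\mathcal{L}_{\ell,E}=-s+1$ is constant along the level.

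The main obstacle I expect is the case $\ell=2$, which the end of the proof of Proposition~\ref{SmartNavigating} explicitly defers to this theorem. Because Lemma~\ref{TatePairing}(c) carries an additional factor $T_{2^n}(P,T)$ when lifting pairings from $\F_q$ to $\F_{q^2}$, the clean relation between $k(E)$ and the pairing behaviour over extensions used above is disrupted. I would handle this by treating curves with $n_2=1$ and $k(E)=0$ separately, using Remark~\ref{extVolcano2} to control the exact increment of the $2$-Sylow structure over $\F_{q^2}$ together with the argument of Proposition~\ref{ChangeStructure} to pin down the extra factor, and then carry out the same horizontal-isogeny comparison as in Case 3 once a suitable extension field has been reached.
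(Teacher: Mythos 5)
Your three cases only ever compare two curves lying at the \emph{same} level, via a horizontal isogeny of degree prime to $\ell$; this is essentially Proposition~\ref{Constancy} (extended, in your Case~3, to the fields $\F_{q^{\ell^s}}$), and it does show that $\mathcal{L}_{\ell,E}$ is well defined as a function of the level. But that is strictly less than what the theorem, as it is proved and then used, asserts: the paper's proof determines the \emph{value} of the invariant at each level and in particular shows that these values are pairwise distinct (cf.\ Figure~\ref{LevelVolcano}), which is exactly what lets Algorithm~\ref{MethodeVolcanAscending} and the endomorphism-ring application read the level, i.e.\ the $\ell$-adic valuation of the conductor, off from $\mathcal{L}_{\ell,E}$. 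The paper's argument is a cross-level induction that is absent from your proposal: Proposition~\ref{existenceNondegenerate} pins down $k(E)=n_2-1$ on the first stability level; then, pushing a point with non-degenerate self-pairing through the ascending isogeny $I$ and observing that its image spans the kernel of the descending dual $\hat{I}$, Lemma~\ref{ImportantLemma} forces $k(E')=k(E)-1$ one level up; and above the second stability level the same mechanism over $\F_{q^{\ell^s}}$, combined with Lemma~\ref{TatePairing}, shows that the minimal $s$ grows by exactly one per level. With only horizontal comparisons nothing rules out, a priori, that two consecutive levels between the stability levels share the same $k(E)$, or that two levels above the second stability level share the same minimal $s$; so the cross-level step is the missing idea, not an optional refinement.

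Concerning $\ell=2$: you correctly sense a difficulty but locate it in the wrong place for your route and do not resolve it for the paper's. For your within-level comparison no lifting of pairings from $\F_q$ to $\F_{q^2}$ is needed at all: Lemma~\ref{ImportantLemma}(a) applied over the fixed field $\F_{q^{2^s}}$ suffices, and two curves at the same level, being isogenous with the same endomorphism ring, have the same group structure over every extension, so Lemma~\ref{TatePairing}(c) never enters. Where $\ell=2$ genuinely bites is the cross-level analysis you skipped: there the extra factor $T_{2^n}(P,T)$ must be controlled, and the paper does this by invoking the absence of distortion maps for points spanning ascending or horizontal kernels (the Charles reference), which yields~\eqref{Bingo} and settles, inside this proof, the case $k(E)=0$, $\ell=2$ deferred from Proposition~\ref{SmartNavigating}. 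Your sketch via Remark~\ref{extVolcano2} and Proposition~\ref{ChangeStructure} does not supply that argument, so even granting your within-level reading, the $2$-volcano case remains open in your write-up.
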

\begin{proof}
\textit{Case 1.} Suppose $\ell \geq 3$.
If $E$ lies below the first stability level, then the structure of the $\ell$-Sylow group of the curve changes from one level to another and $n_1$ characterizes the level of the curve in its $\ell$-volcano.\\
 Suppose now that $E$ lies below the crater, on the first stability level or above it.
Take $P$ and $Q$ two points such that $E[\ell^{n_2}]=\langle P,Q \rangle$ and
 we may assume that $P$ has non-degenerate self-pairing,
 i.e. $T_{\ell^{n_2}}(P,P)\in \mu_{\ell^{k(E)+1}}\backslash \mu_{\ell^{k(E)}}$, and that $Q$ has degenerate self-pairing, i.e. $T_{\ell^{n_2}}(Q,Q)\in \mu_{\ell^{k(E)}}$.
The point $\ell^{n_2-1} Q$ generates the kernel of an ascending isogeny $I: E \rightarrow E'$. We denote by $P'=I(P)$ and, by using Lemma~\ref{ImportantLemma}, we get
\begin{eqnarray*}
T_{\ell^{n_2}}(P',P')\in \mu_{\ell^{k(E)}}\backslash \mu_{\ell^{k(E)-1}}.
\end{eqnarray*}
Note that $P'$ is such that $\ell^{n_2-1}P'$ generates the kernel of $\hat{I}$, which is a descending isogeny. Consequently, the self-pairing of $P'$
is non-degenerate, which means that $k(E')=k(E)-1$.
By Proposition~\ref{existenceNondegenerate}, we have that $k(E)=n_2-1$ if the curve $E$ lies on the first stability level. The reasoning above
implies that $k(E)=n_2-2$ for all curves lying one level above the first stability level.
Iterating this procedure, it also follows that as we ascend from the first stability level to the second one,
the value of $k(E)$ decreases by $1$ at each level. In particular, it equals $0$ at the second stability level and $-1$ at all levels above the second stability level (all self-pairings of curves on these levels are degenerate).\\
Suppose now that $E$ is a curve below the crater, on the second stability level or above it.
We show by induction that if the value of $k(E)$ corresponding to $E$ defined over $\F_{q^{\ell^s}}$ is $0$, then for a curve $E'$ lying one level above the value $k(E')$ is $0$ over $\F_{q^{\ell^{s+1}}}$ and $\F_{q^{\ell^{s+1}}}$ is the smallest extension field with this property. We suppose that $$E(\F_{q^{\ell^s}})[\ell^{\infty}]\simeq \Z/\ell^{n_1}\Z\times \Z/\ell^{n_2}\Z,$$
with $n_1\geq n_2$. We consider $P$ and $Q$ two $\ell^{n_2}$-torsion points such that $\langle P,Q \rangle =E[\ell^{n_2}]$ and that $P$ has primitive self-pairing, while $Q$ has degenerate self-pairing. We denote by $I:E\rightarrow E'$ the ascending isogeny whose kernel is generated by $<\ell^{n_2-1} Q>$ and by $P'=I(P)$. By Lemma~\ref{ImportantLemma} we have
\begin{eqnarray*}
T_{\ell^{n_2}}(P',P')=1.
\end{eqnarray*}
Since $\ell^{n_2-1}P'$ generates the kernel of the dual $\hat{I}$, it follows that $k(E')=-1$ over $\F_{q^{\ell^s}}$.
We denote by $\bar{P}\in E(\F_{q^{\ell^{s+1}}})$ a point such that $\ell \bar{P}=P$.
By Lemma~\ref{TatePairing} we have that
\begin{eqnarray*}
T_{\ell^{n_2+1}}^{(\F_{q^{\ell^{s+1}}})}(\bar{P},\bar{P})\in \mu_{\ell^2}\backslash \mu_{\ell}.
\end{eqnarray*}
By denoting $P''=I(\bar{P})$, we get that
\begin{eqnarray*}
T_{\ell^{n_2+1}}(P'',P'')\in \mu_{\ell}^*.
\end{eqnarray*}
It follows that $k(E')=0$ over $\F_{q^{\ell^{s+1}}}$ and this is the smallest extension field with this property.\\

\noindent
\textit{Case 2.} We treat the case $\ell=2$.
Suppose that $$E[2^{\infty}](\F_q)\simeq \Z/2^{n_1}\Z \times \Z/2^{n_2}\Z.$$
If $n_2>1$, then $$E[2^{\infty}](\F_{q^2})\simeq \Z/2^{n_1+\Delta}\Z \times \Z/2^{n_2+1}\Z.$$
We consider points $\bar{P},\bar{Q}\in E[2^{n_2+1}]$ and $P,Q\in E[2^{n_2}]$ such that $P=2\bar{P}$ and $Q=2\bar{Q}$. Then, by Lemma~\ref{TatePairing}, we have $$T_{2^{n_2+1}}(\tilde{P},\tilde{Q})^2=\pm T_{2^{n_2}}(P,Q).$$
Hence, if $k(E)\geq 2$, the proof is similar to the one of \textit{Case 1}.
We consider the curve $E$ such that $k(E)=1$ and we take a curve $E'$ lying one level above such that there is an ascending isogeny $I:E\rightarrow E'$. Since $k(E)=1$ and the kernel of $I$ is degenerate, then there is a point $P\in E[\ell^{n_2}]$ such that $P'=I(P)$ generates the kernel of $\hat{I}$. By Lemma~\ref{ImportantLemma} we get that
\begin{eqnarray*}
T_{\ell^{n_2}}(P',P')\in \mu_{\ell}^*.
\end{eqnarray*}
Hence the points of the kernel of any descending isogeny starting at $E'$ have self-pairings primitive $\ell$-th roots of unity. Reasoning as in the case $k(E)\geq 2$ over $\F_{q^2}$, we get that $k(E')=1$ over $\F_{q^2}$. A point generating the kernel of an ascending or horizontal isogeny does not have distortion maps (see~\cite[Thm. 2.1]{Charles}). Hence we have
\begin{eqnarray}\label{Bingo}
T_{2^{n_2+1}}^2(\tilde{Q},\tilde{Q})=T_{2^{n_2}}(Q,Q),
\end{eqnarray}
for $Q\in E[\ell^{n_2}](\F_q),\tilde{Q}\in E[\F_{q^2}]$ such that $\ell\tilde{Q}=Q$ and that $\ell^{n_2-1}Q$ generates the kernel of an ascending isogeny. Since $k(E')=1$ for $E'$ defined over $\F_{q^2}$, we get that $T_{\ell^{n_2}}(Q,Q)=1$. We conclude that $k(E')=0$ over $\F_q$. By induction, we may show in a similar manner that there is an extension field over which all curves lying above the second stability level have polynomials $\mathcal{P}$ different from zero.
If $n_2=1$, the first stability level and the second one coincide. If $E$ is a curve on the first stability level of an irregular $2$-volcano (i.e. $q\equiv 1\,\,\textrm{mod}\,\,4$),
we consider the volcano defined over $\F_{q^2}$.
As explained in Remark~\ref{extVolcano2},
 $$E[2^{\infty}](\F_{q^2})\simeq \Z/2^{n_1+\Delta}\Z\times \Z/2^{2}\Z$$ and since the curve lies on the
first stability level, there are points of order $4$ with primitive self-pairing, which implies that for any curve lying one level above, the polynomial $\PP$ is different from zero. Over $\F_{q^2}$, we may reason as in the case $n_2>1$ and show that there is an extension field over which
all curves lying above the first stability level have polynomials $\PP$ different from zero.\\
Finally, if $n_2=1$ and the volcano is regular of height $1$ (i.e. $q\equiv 3~\textrm{mod}~p$), it is obvious that
$\mathcal{L}_{\ell,E}$ is an invariant at every level of the volcano.
\end{proof}

\begin{figure}
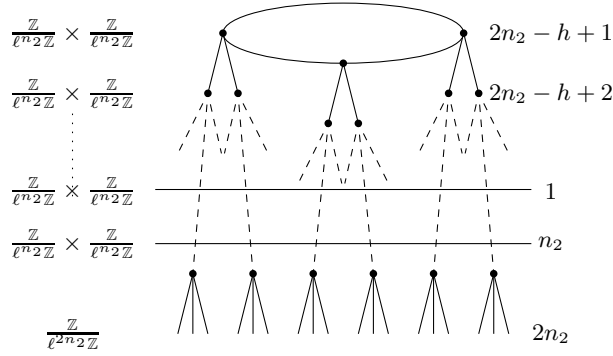

\begin{center}
\begin{pgfpicture}{-4cm}{0cm}{11cm}{6cm}
\pgfsetxvec{\pgfpoint{0.4cm}{0cm}}
\pgfsetyvec{\pgfpoint{0cm}{0.40cm}}
\pgfellipse[stroke]{\pgfxy(5,10)}{\pgfxy(4,0)}{\pgfxy(0,1)}
\pgfputat{\pgfxy(-4,10)}{\pgfbox[center,center]{\small{$\frac{\Z}{\ell^{n_2}\Z}\times \frac{\Z}{\ell^{n_2}\Z} $}}}
\pgfputat{\pgfxy(11.9,10)}{\pgfbox[center,center]{\small{$2n_2-h+1 $}}}
\pgfcircle[fill]{\pgfxy(1,10)}{0.05cm}
\pgfcircle[fill]{\pgfxy(9,10)}{0.05cm}
\pgfcircle[fill]{\pgfxy(5,9)}{0.05cm}
\pgfline{\pgfxy(1,10)}{\pgfxy(0.5,8)}
\pgfline{\pgfxy(1,10)}{\pgfxy(1.5,8)}
\pgfline{\pgfxy(9,10)}{\pgfxy(8.5,8)}
\pgfline{\pgfxy(9,10)}{\pgfxy(9.5,8)}
\pgfline{\pgfxy(5,9)}{\pgfxy(5.5,7)}
\pgfline{\pgfxy(5,9)}{\pgfxy(4.5,7)}
\pgfputat{\pgfxy(-4,8)}{\pgfbox[center,center]{\small{$\frac{\Z}{\ell^{n_2}\Z}\times \frac{\Z}{\ell^{n_2}\Z} $}}}
\pgfputat{\pgfxy(11.9,8)}{\pgfbox[center,center]{\small{$2n_2-h+2$}}}
\pgfcircle[fill]{\pgfxy(0.5,8)}{0.05cm}
\pgfcircle[fill]{\pgfxy(1.5,8)}{0.05cm}
\pgfcircle[fill]{\pgfxy(8.5,8)}{0.05cm}
\pgfcircle[fill]{\pgfxy(9.5,8)}{0.05cm}
\pgfcircle[fill]{\pgfxy(4.5,7)}{0.05cm}
\pgfcircle[fill]{\pgfxy(5.5,7)}{0.05cm}

\pgfxyline(0,2)(0.5,0)
\pgfxyline(0,2)(0,0)
\pgfxyline(0,2)(-0.5,0)
\pgfxyline(2,2)(1.5,0)
\pgfxyline(2,2)(2,0)
\pgfxyline(2,2)(2.5,0)

\pgfxyline(8,2)(8.5,0)
\pgfxyline(8,2)(8,0)
\pgfxyline(8,2)(7.5,0)
\pgfxyline(10,2)(9.5,0)
\pgfxyline(10,2)(10,0)
\pgfxyline(10,2)(10.5,0)
\pgfxyline(11.25,4.8)(-1.25,4.8)
\pgfxyline(11.25,3)(-1.25,3)
\pgfxyline(4,2)(3.5,0)
\pgfxyline(4,2)(4,0)
\pgfxyline(4,2)(4.5,0)
\pgfxyline(6,2)(5.5,0)
\pgfxyline(6,2)(6,0)
\pgfxyline(6,2)(6.5,0)

\pgfsetdash{{0.1cm}{0.1cm}}{0.05cm}
\pgfxyline(0.5,8)(-0.5,6)
\pgfxyline(0.5,8)(0,2)
\pgfxyline(0.5,8)(0.95,6)
\pgfxyline(1.5,8)(1.05,6)
\pgfxyline(1.5,8)(2,2)
\pgfxyline(1.5,8)(2.5,6)

\pgfxyline(8.5,8)(7.5,6)
\pgfxyline(8.5,8)(8,2)
\pgfxyline(8.5,8)(8.95,6)
\pgfxyline(9.5,8)(9.05,6)
\pgfxyline(9.5,8)(10,2)
\pgfxyline(9.5,8)(10.5,6)

\pgfxyline(4.5,7)(3.5,5)
\pgfxyline(4.5,7)(4,2)
\pgfxyline(4.5,7)(4.95,5)
\pgfxyline(5.5,7)(5.05,5)
\pgfxyline(5.5,7)(6,2)
\pgfxyline(5.5,7)(6.5,5)

\pgfsetdash{{0.02cm}{0.1cm}}{0.05cm}
\pgfxyline(-4,7.5)(-4,4.5)

\pgfcircle[fill]{\pgfxy(0,2)}{0.05cm}
\pgfcircle[fill]{\pgfxy(2,2)}{0.05cm}
\pgfcircle[fill]{\pgfxy(4,2)}{0.05cm}
\pgfcircle[fill]{\pgfxy(6,2)}{0.05cm}
\pgfcircle[fill]{\pgfxy(8,2)}{0.05cm}
\pgfcircle[fill]{\pgfxy(10,2)}{0.05cm}
\pgfputat{\pgfxy(-4,4.75)}{\pgfbox[center,center]{\small{$\frac{\Z}{\ell^{n_2}\Z}\times \frac{\Z}{\ell^{n_2}\Z} $}}}
\pgfputat{\pgfxy(11.9,4.75)}{\pgfbox[center,center]{\small{$1$}}}
\pgfputat{\pgfxy(-4,3)}{\pgfbox[center,center]{\small{$\frac{\Z}{\ell^{n_2}\Z}\times \frac{\Z}{\ell^{n_2}\Z} $}}}
\pgfputat{\pgfxy(11.9,3)}{\pgfbox[center,center]{\small{$n_2$}}}
\pgfputat{\pgfxy(-4,0)}{\pgfbox[center,center]{\small{$\frac{\Z}{\ell^{2n_2}\Z} $}}}
\pgfputat{\pgfxy(11.9,0)}{\pgfbox[center,center]{\small{$2n_2$}}}
\end{pgfpicture}
\caption{A level invariant in an $\ell$-volcano}
\label{LevelVolcano}
\end{center}
\end{figure}

We conclude this section by presenting an algorithm which determines the group structure of the $\ell^{\infty}$-torsion group of a curve $E$ (Algorithm~\ref{MethodeVolcanStructure}) and also an algorithm which outputs the
 kernel of a horizontal (ascending) isogeny from $E$,
when
$ \displaystyle E[\ell^{\infty}](\F_q)$ is  given (Algorithm~\ref{MethodeVolcanAscending}).
\begin{algorithm}[h!]
\caption{Computing the structure of the $\ell^{\infty}$-torsion of $E$
  over $\F_q$\hfill \break (assuming volcano height $\geq 1$)}
\label{MethodeVolcanStructure}
\begin{algorithmic}[1]
\REQUIRE A curve $E$ defined over $\F_q$, a prime $\ell$
\item[\textbf{Compute:}] Structure $\Z/\ell^{n_1}\Z\times \Z/\ell^{n_2}\Z$, generators $P_1$ and $P_2$
\STATE Check that $q\equiv 1\pmod{\ell}$ (if not, need to move to
extension field: \textbf{abort})
\STATE Let $t$ be the trace of $E(\F_q)$
\STATE Check $q+1-t\equiv
0\pmod{\ell}$ (if not, consider twist or  \textbf{abort})
\STATE Let $d_{\pi} = t^2-4q,$ let $z$ be the largest integer such
that $\ell^z|d_\pi$ and $h=\lfloor \frac{z}{2}\rfloor$
\STATE Let $n$ be the largest integer such that $\ell^n|q+1-t$ and
$N=\frac{q+1-t}{\ell^n}$
\STATE Take a random point $R_1$ on $E(\F_q)$, let $P_1=N\cdot R_1$
\STATE  Let $n_1$ be the smallest integer such that $\ell^{n_1}P_1=0$
\IF{$n_1=n$}
\STATE  \textbf{Output:} Structure is $\frac{\Z}{\ell^n\Z}$, generator
$P_1.$ \textbf{Exit}
\hfill \break  ($E$ is on the floor, ascending isogeny with kernel $\langle \ell^{n-1}P_1\rangle$)
\ENDIF
\STATE Take a random point $R_2$ on $E(\F_q)$, let $P_2=N\cdot R_2$
and $n_2=n-n_1$
\STATE Let
$\alpha=\log_{\ell^{n_2}P_1}(\ell^{n_2}P_2)\pmod{\ell^{n_1-n_2}}$
\IF {$\alpha$ is undefined}
\STATE \textbf{Goto} 6 ($\ell^{n_2}P_2$ does not belong to $\langle
\ell^{n_2}P_1 \rangle$)
\ENDIF
\STATE Let $P_2=P_2-\alpha P_1$
\STATE  \textbf{If}
$\mbox{WeilPairing}_\ell(\ell^{n_1-1}P_1,\ell^{n_2-1}P_2)=1$
\textbf{goto} 6 (This checks linear independence)
\STATE  \textbf{Output:} Structure is $\frac{\Z}{\ell^{n_1}\Z}\times \frac{\Z}{\ell^{n_2}\Z}$, generators
$(P_1,P_2)$
\end{algorithmic}
\end{algorithm}
\begin{algorithm}[h!]
\caption{Finding the level in the volcano and the kernel of ascending or horizontal isogenies
  \hfill \break (Assuming curve not on floor and below the second
  stability level)}
\label{MethodeVolcanAscending}
\begin{algorithmic}[1]
\REQUIRE A curve $E$, its structure $\frac{\Z}{\ell^{n_1}\Z}\times
\frac{\Z}{\ell^{n_2}\Z}$ and generators $(P_1,P_2)$
\IF {$n_1>n_2$}
\STATE The isogeny with kernel $\langle \ell^{n_1-1}P_1\rangle$ is
ascending or horizontal
\STATE To check whether there is another, continue the algorithm
\ENDIF
\STATE Let $g$ be a primitive $\ell$-th root of unity in $\F_q$
\STATE Let $Q_1=\ell^{n_1-n_2}P_1$
\STATE Let $a=T_{\ell^{n_2}}(Q_1,Q_1),$
$b=T_{\ell^{n_2}}(Q_1,P_2)\cdot T_{\ell^{n_2}}(P_2,Q_1)$ and
$c=T_{\ell^{n_2}}(P_2,P_2)$
\STATE \textbf{If} $(a,b,c)=(1,1,1)$ \textbf{abort} (Above the second stability level)
\STATE Let $\mbox{Count}=0$.
\REPEAT
\STATE Let $a'=a$, $b'=b$ and $c'=c$
\STATE Let $a=a^\ell$, $b=b^\ell$ and $c=c^\ell$
\STATE Let $\mbox{Count}=\mbox{Count}+1$
\UNTIL {$a=1$ and $b=1$ and $c=1$}
\STATE Let $L_a=\log_g(a')$, $L_b=\log_g(b')$ and $L_c=\log_g(c')\pmod{\ell}$
\STATE Let $\PP(x,y)=L_ax^2+L_bxy+L_cy^2 \pmod{\ell}$
\STATE \textbf{If} $n_1=n_2$, let $\mbox{\it
  LevelInvariant}=\mbox{Count}-1$ \textbf{else} $\mbox{\it LevelInvariant}=n_1$
\STATE \textbf{Output:} Level Invariant ($\mathcal{L}_{\ell,E}$) is {\it LevelInvariant}
\STATE \textbf{If} $\PP$ has no homogenous roots modulo $\ell$, \textbf{Output:}
No isogeny (a single point on the crater)
\STATE \textbf{If} single root $(x_1,x_2)$ \textbf{Output:}
One isogeny with kernel $\langle \ell^{n_2-1}(x_1Q_1+x_2P_2)\rangle$
\IF {$\PP$ has two roots  $(x_1,x_2)$ and  $(y_1,y_2)$}
\STATE Two isogenies with kernel $\langle
\ell^{n_2-1}(x_1Q_1+x_2P_2)\rangle$ and $\langle \ell^{n_2-1}(y_1Q_1+y_2P_2)\rangle$
\ENDIF
\end{algorithmic}
\end{algorithm}

We assume that the height of the volcano is $h\leq 2n_2+1$, or, equivalently, that
the curve $E$ lies on or below the second stability level, which implies that the polynomial $\PP$ is non-zero at every level in the volcano. This allows us to distinguish between different directions of $\ell$-isogenies departing from $E$. Algorithm~\ref{MethodeVolcanAscending} computes the level in the volcano of the curve $E$, which is equivalent to computing the level invariant~$\mathcal{L}_{\ell,E}$. \\
Of course, similar algorithms can be given for curves lying above the second stability level, but in this case we need to consider the volcano over an extension field $\F_{q^{\ell^s}}$. Since computing points defined over extension fields of degree greater than $\ell$ is expensive, our complexity analysis in Section~\ref{WalkingOnTheVolcano} will show that it is more efficient to use Kohel's and Fouquet-Morain algorithms to explore the volcano until the second level of stability is reached and to use Algorithms~\ref{MethodeVolcanStructure} and~\ref{MethodeVolcanAscending} afterwards.

\section{Walking the volcano: modified algorithms}\label{WalkingOnTheVolcano}
As mentioned in the introduction, several applications of isogeny
volcanoes have recently been proposed. These applications require the
ability to walk descending and ascending paths on the volcano and also
to walk on the crater of the volcano. We recall that a \textit{path} is a
sequence of isogenies that never backtracks.
We start this section with a brief description of existing algorithms
for these tasks, based on methods given by Kohel~\cite{Kohel} and by
Fouquet and Morain in~\cite{FouMor}. We present modified
algorithms, which rely on the method presented in Algorithm~\ref{MethodeVolcanAscending} to find ascending or horizontal
isogenies and to compute the level invariant $\mathcal{L}_{\ell,E}$. Then, we give complexity analysis for these algorithms and show that in many cases our method is competitive. Finally, we give two concrete examples in which the new algorithms can walk the crater of an isogeny volcano very efficiently compared to existing algorithms.

\subsubsection*{A brief description of existing algorithms.} Existing
algorithms rely on three essential properties in isogeny
volcanoes. Firstly, it is easy to detect that a curve lies on the floor
of a volcano, since in that case, there is a single isogeny from this
curve. Moreover, this isogeny can only be ascending (or horizontal if
the height is~$0$). Secondly, if in an arbitrary path in a volcano there is a
descending isogeny, then all the subsequent isogenies in the path are
also descending. Thirdly, from a given curve, there is either exactly one ascending
isogeny or at most two horizontal ones. As a consequence, finding a
descending isogeny from any curve is easy: it suffices to walk three
paths in parallel until one path reaches the floor. This shortest path
is necessarily descending and its length gives the level of the starting
curve in the volcano. To find an ascending or horizontal isogeny,
the classical algorithms try all possible isogenies until they find
one which leads to a curve either at the same level or above the
starting curve. This property is tested by constructing descending paths from
 all the neighbours of the initial curve and picking the curve which gave the longest path.

Note that alternatively, one could walk in parallel all of the
$\ell+1$ paths
starting from the initial curve and keep the (two) longest as
horizontal or ascending. As far as we know, this has not been proposed
in the literature, but this variant of existing algorithms offers a
slightly better asymptotic time complexity. For completeness, we give
a pseudo-code description of this parallel variant of Kohel and
Fouquet-Morain algorithms as Algorithm~\ref{AlgoParallel}.

\begin{algorithm}[h!]
\caption{Parallel variant of ascending/horizontal step \hfill \break (using modular polynomials)}
\label{AlgoParallel}
\begin{algorithmic}[1]
\REQUIRE A $j$-invariant $j_0$ in $\F_q$, a prime $\ell$, the modular
polynomial $\Phi_\ell(X,Y)$.
\STATE Let $f(x)=\Phi_\ell(X,j_0)$
\STATE Compute $J_0$ the list of roots of $f(x)$ in $\F_q$
\STATE \textbf{If} $\#J_0=0$ \textbf{Output:} ``Trivial volcano'' \textbf{Exit}
\STATE \textbf{If} $\#J_0=1$ \textbf{Output:} ``On the floor, step leads to:'', $J_0[1]$ \textbf{Exit}
\STATE \textbf{If} $\#J_0=2$ \textbf{Output:} ``On the floor, two
horizontal steps to:'', $J_0[1]$ and $J_0[2]$ \textbf{Exit}
\STATE Let $J=J_0$. Let $J'$ and $K$ be empty lists. Let  $\mbox{Done}=\textbf{false}$.
\REPEAT
\STATE Perform multipoint evaluation of $\Phi_\ell(X,j)$, for each
$j\in J$. Store in list $F$
\FOR{$i$ from $1$ to $\ell+1$}
\STATE Perform partial factorization of $F[i]$, computing at most two roots
$r_1$ and $r_2$
\IF{$F[i]$ has less than two roots}
\STATE  Let $\mbox{Done}=\textbf{true}$. Append $\bot$ to $K$ (Reaching floor)
\ELSE
\STATE \textbf{If} $r_1\in J'$ \textbf{then} append $r_1$ to $K$ \textbf{else} append $r_2$ to $K$. (Don't backtrack)
\ENDIF
\ENDFOR
\STATE Let $J'=J$, $J=K$ and $K$ be the empty list
\UNTIL{Done}
\STATE \textbf{for each} $i$ from $1$ to $\ell+1$ such that $J[i]\neq
\bot$ append $J_0[i]$ to $K$
\STATE \textbf{Output:} ``Possible step(s) lead to:'' $K$ (One or two outputs)
\end{algorithmic}
\end{algorithm}

\subsubsection*{Basic idea of the modified algorithms.}
In our algorithms, we first need to choose a large enough extension
field to guarantee that the kernels of all required isogenies are
spanned by $\ell$-torsion points defined on this extension field.  As
explained in Corollary~\ref{existencelTorsion}, the degree $r$ of this
extension field is the order of $q$ modulo $\ell$ and it can be computed
very quickly after factoring $\ell-1$. As usual, we choose an
arbitrary irreducible polynomial of degree $r$ to represent $\F_{q^r}$.
 Points of order $\ell$ are computed by running Algorithm~\ref{MethodeVolcanStructure}, this time over $\F_{q^r}$. Once
this is done, assuming that we are starting from a curve below the
second level of stability, we use Algorithm~\ref{MethodeVolcanAscending} to find all ascending or horizontal isogenies from the initial curve.
 In order to walk a descending path, it suffices to choose any other isogeny. Note that, in the subsequent steps of a descending path, in the cases where the group structure satisfies $n_1>n_2$, it is not necessary to run
Algorithm~\ref{MethodeVolcanAscending} as a whole. Indeed, since we
know that we are not on the crater, there is a single ascending
isogeny and it is spanned by $\ell^{n_1-1}P_1$.
Note that in order to determine the level of the curve in the volcano and hence the $\ell$-adic valuation of the endomorphism ring we do not need to take any steps on the volcano. Indeed, Algorithm~\ref{MethodeVolcanAscending} computes the level invariant $\mathcal{L}_{\ell,E}$ with three pairing computations and several exponentiations to the power $\ell$. Finally, above the second stability level, we have two options. In theory, we can consider curves over larger extension fields (in order to get polynomials $\PP\neq 0$). Note that this is too costly in practice. Therefore, we use preexisting algorithms, but it is not necessary to follow descending paths all the way to the floor. Instead, we can stop
these paths at the second stability level, where our methods can be used.

\subsubsection*{Computing endomorphism rings}
Kohel~\cite{Kohel} describes a deterministic algorithm to compute the endomorphism ring of an elliptic curve. For small values of $\ell$ and when a large power of $\ell$ divides the conductor of the endomorphism ring, he uses algorithms traveling on isogeny volcanoes to find the shortest path from the curve to the floor and thus determine the level of the curve in the volcano. We propose replacing the descent to the floor by a computation of the level invariant $\mathcal{L}_{\ell,E}$. On an almost regular volcano this is done by computing the structure of the $\ell$-Sylow group and then by computing the value of $k(E)$.

\subsection{Complexity analysis}
\subsubsection*{Computing a single isogeny.}
Before analyzing the complete algorithms, we first compare the costs
of taking a single step on a volcano by using the two methods existing
in the literature: modular polynomials and classical V\'elu's formulae.
Suppose that we wish to take a step from a curve $E$. With the modular polynomial approach, we have to evaluate the polynomial $f(X)=\Phi_{\ell}(X,j(E))$ and find its roots in $\F_q$. Assuming that the modular polynomial (modulo the characteristic of $\F_q$) is given as input and using asymptotically fast probabilistic algorithms to factor $f(X)$, the
cost of a step in terms of arithmetic operations in $\F_q$ is
$O(\ell^2+M(\ell)\log{q}),$ where $M(\ell)$ denotes the operation
count of multiplying polynomials of degree $\ell$. In this formula,
the first term corresponds to evaluation of
$\Phi_{\ell}(X,j(E_{i-1}))$ and the second term to root
finding\footnote{Completely splitting $f(X)$ to find all its roots
would cost $O(M(\ell)\log{\ell}\log{q})$, but this is reduced to
$O(M(\ell)\log{q})$ because we only need a constant number of roots
for each polynomial $f(X)$.}.

With V\'elu's formulae, we need to take into account the fact that the
required $\ell$-torsion points are not necessarily defined over
$\F_q$. Let $r$ denotes the smallest integer such that the required
points are all defined over $\F_{q^r}$. We know that $1\leq r\leq
\ell-1$. Using asymptotically efficient algorithms to perform
arithmetic operations in $\F_{q^r}$, multiplications in $\F_{q^r}$ cost
$M(r)$ $\F_q$-operations. Given an $\ell$-torsion point $P$ in
$E(\F_{q^r})$, the cost of using V\'elu's formulae is $O(\ell)$
operations in $\F_{q^r}$. As a consequence, in terms of $\F_q$
operations, each isogeny costs $O(\ell M(r))$ operations.
As a consequence, when $q$ is not too large and $r$ is close to
$\ell$, using V\'elu formulae is more expensive by a logarithmic
factor.

\subsubsection*{Computing an ascending or horizontal path.}
With the classical algorithms, each step in an ascending or horizontal
path requires considering all the $O(\ell)$ neighbours of the curve
and testing each of them by walking descending paths of height bounded by $h$. The expected cost of each descending
path is $O(h(\ell^2+M(\ell)\log{q}))$ and the total cost is
$O(h(\ell^3+\ell M(\ell)\log{q}))$
(see~\cite{Kohel,Sutherland1}). When $\ell >> \log{q}$, this cost is dominated by
the evaluations of the polynomial $\Phi_\ell$ at each
$j$-invariant. Thus, by walking in parallel $\ell+1$ paths from the
original curve, we can amortize the evaluation of $\Phi_{\ell}(X,j)$
over many $j$-invariants using fast multipoint evaluation,
see~\cite[Section 3.7]{Montgomery} or~\cite{ShoupvzG}, thus replacing
$\ell^3$ by $\ell\,M(\ell)\log{\ell}$ and reducing the complexity of a
step to $O(h \ell\,M(\ell)(\log{\ell}+\log{q}))$. However, this
increases the memory requirements.

With our modified algorithms, we need to find the $\ell^{\infty}$-structure of each curve, compute some discrete logarithms in $\ell$-groups, perform a
small number of pairing computations and compute the roots of
$\PP_{E,\ell^{n_2}}$.  Except for the computation of discrete
logarithms, it is clear that all these additional operations are
polynomial in $n_2$ and $\log{\ell}$ and they take negligible time
in practice (see Section~\ref{section:example}). Using generic algorithms, the
discrete logarithms cost $O(\sqrt{\ell})$ operations, and this can be reduced to $\log{\ell}$  by storing a sorted table of
precomputed logarithms. After this is done, we have to compute at most
two isogenies, ignoring the one that backtracks. Thus, the computation
of one ascending or horizontal step is dominated by the computation of
isogenies and costs $O(\ell M(r))$.

For completeness, we also mention the complexity analysis of
Algorithm~\ref{MethodeVolcanStructure}. The dominating step here is
the multiplication by $N$ of randomly chosen points. When we
consider the curve over an extension field $\F_{q^r}$, the expected cost is
$O(r\log{q})$ operations in  $\F_{q^r}$, i.e. $O(rM(r)\log{q})$
operations in $\F_q$.

Finally, comparing the two approaches on a regular volcano, we see that even in
the less favorable case, we gain a factor $h$ compared to the
classical algorithms. More precisely,  the two
are comparable, when the height $h$ is
small  and $r$ is close to $\ell$. In all the other cases, our modified algorithms are more efficient. This analysis is summarized in
Table~\ref{comparaisonGlobalEvaluation}. For compactness $O(\cdot)$s are
omitted from the table.
\begin{table}
\caption{\label{comparaisonGlobalEvaluation} Walking the volcano:
  Order of the expected cost per step}
\centering
\begin{tabular}{|c|c|c|c|}
\hline \hline
\,\, & \multicolumn{2}{c|} {Descending path} & Ascending/Horizontal\\
\hline
\,\, & One step & Many steps &\\
\hline
\cite{Kohel,FouMor} & $h(\ell^2+M(\ell)\log q)$ & $(\ell^2+M(\ell)\log q)$ &
$h(\ell^3+\ell\,M(\ell)\log q)$\\
Parallel evaluation & -- & -- & $h \ell\,M(\ell)(\log{\ell}+\log{q})$ \\
\hline
Regular volcanoes& \multicolumn{3}{c|}{\mbox{Structure determination}}
\\
\cline{2-4}
Best case & \multicolumn{2}{c|} {$\log{q}$}& $\log{q}$\\
Worst case $r\approx \ell/2$&\multicolumn{2}{c|} { $r\,M(r) \log{q}$ }&
 $r\,M(r) \log{q}$\\
\hline
Regular volcanoes& \multicolumn{3}{c|}{\mbox{Isogeny construction}} \\
\cline{2-4}
Best case &\multicolumn{2}{c|} {$\ell$}& $\ell$\\
Worst case $r\approx \ell/2$& \multicolumn{2}{c|} {$r\,M(r)$ }&
 $r\,M(r)$\\
\hline
 Irregular volcanoes & \multicolumn{3}{c|}{\mbox{}}\\
(worst case) & \multicolumn{3}{c|}{\mbox{No
     improvement}} \\
\hline
\end{tabular}
\end{table}

\subsubsection*{Computing endomorphism rings}
On a regular volcano, computing the invariant $\mathcal{L}_{\ell,E}$ involves computing the group structure and some pairings. Hence, the expected running time of the computation is $O(rM(r)\log q+n_2\log \ell)$, while the complexity of Kohel's algorithm is $O(h(\ell^2+M(\ell)\log q))$.


\subsubsection*{Irregular volcanoes.}
Consider a fixed value of $q$ and let $s=v_{\ell}(q-1)$. First of all,
note that all curves lying on irregular volcanoes satisfy
$\ell^{2s}|q+1-t$ and $\ell^{2s+2}|t^2-4q$. For traces that satisfy
only the first condition, we obtain a regular volcano. We estimate the
total number of different traces of elliptic curves lying on
$\ell$-volcanoes by
$\#\{t~\mbox{s.t.}~\ell^{2s}|q+1-t\,\,\textrm{and}\,\,t\in
[-2\sqrt{q},2\sqrt{q}]\}\sim \frac{4\sqrt{q}}{\ell^{2s}}.$\smallskip

\noindent Next, we
estimate traces of curves lying on irregular volcanoes
by
$$~~~~~~~~~~~\#\{t~\mbox{s.t.} ~\ell^{2s}|q+1-t~,\ell^{2s+2}|t^2-4q\,\,\textrm{and}\,\,t\in
[-2\sqrt{q},2\sqrt{q}]\}\sim \frac{4{\sqrt{q}}}{\ell^{2s+2}}.$$
Indeed, by writing $q=1+\gamma \ell^s$ and $t=2+\gamma \ell^s+\mu
\ell^{2s}$, and imposing the condition $\ell^{2s+2}|t^2-4q$, we find
that $t\cong t_0(\gamma,\mu)( \textrm{mod}\,\,\ell^{2s+2})$.

Thus, we estimate the probability of picking a curve whose volcano is not regular, among curves lying on volcanoes of height greater than $0$, by $\frac{1}{\ell^2}$. (This is a crude  estimate because the
number of curves for each trace is proportional to the
Hurwitz class number\footnote{See~\cite[Th.  14.18]{Cox} for $q$ prime.} $H(t^2-4q)$). This probability is not negligible for small values of $\ell$. However, since our method also works everywhere on almost regular volcano, the
probability of finding a volcano where we need to combine our modified
algorithm with the classical algorithms is even lower.
Furthermore, in some applications, it is possible to restrict
ourselves to regular volcanoes.

\subsection{Some practical examples}
\label{section:example}
\subsubsection*{A favorable case.}
In order to demonstrate the potential of the modified algorithm, we
consider the favorable case of a volcano of height $2$, where all the
necessary $\ell$-torsion points are defined over the base field $\F_p$,
where $p=619074283342666852501391$ is prime. We choose $\ell=100003$.\smallskip

\noindent Let $E$ be the elliptic curve whose Weierstrass equation is
$$~~~~y^2=x^3+198950713578094615678321\,x+32044133215969807107747.$$
The group $E[\ell^\infty]$ over $\F_p$ has structure $\frac{\Z}{\ell^4
  Z}.$ It is spanned by the point
$$P=(110646719734315214798587 , 521505339992224627932173).$$
Taking the $\ell$-isogeny $I_1$ with kernel $\langle \ell^3 P\rangle$,
we obtain the curve
$$E_1: y^2 = x^3 + 476298723694969288644436\,x + 260540808216901292162091,
$$
with structure of the $\ell^\infty$-torsion $\frac{\Z}{\ell^3\Z}\times
\frac{\Z}{\ell\Z}$ and generators
$$P_1 = (22630045752997075604069 , 207694187789705800930332)~\mbox{and}$$
$$Q_1 = (304782745358080727058129 ,193904829837168032791973).$$

\noindent The $\ell$-isogeny $I_2$ with kernel $\langle \ell^2 P_1\rangle$ leads
to the curve
$$
E_2: y^2 = x^3 + 21207599576300038652790\,x + 471086215466928725193841,
$$
on the volcano's crater and with structure $\frac{\Z}{\ell^2\Z}\times
\frac{\Z}{\ell^2\Z}$ and generators
$$P_2  =  (545333002760803067576755 ,367548280448276783133614 )~\mbox{and}$$
$$Q_2= (401515368371004856400951 , 225420044066280025495795 ).$$
\smallskip

\noindent Using pairings on these points, we construct the polynomial:
$$\PP(x,y)=97540\,x^2 + 68114\,x\,y + 38120\, y^2,
$$
having homogeneous roots $(x,y)=(26568, 1)$ and $(72407, 1)$. As a
consequence, we have two horizontal isogenies with kernels
$\langle \ell(26568\,P_2+Q_2)\rangle $ and $\langle \ell(72407\,P_2+Q_2)\rangle$.
We can continue and make a complete walk around the crater which
contains $22$ different curves. Using a simple implementation under
Magma~2.15-15, a typical execution takes about 134 seconds\footnote{This timing varies
  between executions. The reason that we first try one root of $\PP$,
  if it backtracks on the crater, we need to try the other one. On
  average, $1.5$ root is tried for each step, but this varies
  depending on the random choices.} on a single core of an Intel Core 2 Duo
at $2.66$~GHz. Most of the time is taken by the computation of
V\'elu's formulas (132 seconds) and the computation of discrete
logarithms (1.5 seconds) which are not tabulated in the
implementation. The computation of pairings only takes 20 milliseconds.

\subsubsection*{A larger example.} We have also implemented the computation
for $\ell=1009$ using an elliptic curve with $j$-invariant
$j=34098711889917$ in the prime field defined by
$p=953202937996763$. The $\ell$-torsion appears in a extension field
of degree $84$. The $\ell$-volcano has height two and the crater
contains 19 curves. Our implementation walks the crater in 20
minutes. More precisely, 750 seconds are needed to generate the
curves' structures, 450 to compute V\'elu's formulas, 28 seconds for
the pairings and 2 seconds for the discrete logarithms.

\subsubsection*{Computing the endomorphism ring}

Our benchmarks show that our method is very efficient in the favorable case,
i.e. when the $\ell$-torsion points are defined over the base field. Otherwise, if $\ell$
is small, the efficiency of our method depends asymptotically on the ratio $h/r$. We have implemented
our algorithm and Kohel's method with MAGMA and ran experiments for various values of $h,r$ and $\ell$.
Results are given in Table~\ref{timings}. For large $\ell$ ($\ell \geq 2^{10}$), we could not test Kohel's
method since modular polynomials may not be precomputed with MAGMA.
\begin{table}
\caption{\label{timings} Endomorphism ring computation: Benchmarks}
\centering
\begin{tabular}{|c|c|c|c|}
\hline
\hline
Parameters & Kohel & This work \\
\hline
$D=1009$, $\ell=31$, $h=10$, $r=1$ &  1.80 s  &  0.01 s    \\
\hline
$D=1009$, $\ell=101$, $h=3$, $r=10$ & 1.18 s  &  0.75 s  \\
\hline
$D=1009$, $\ell=31$,  $h=6$, $r=5$ &  1.15 s    & 0.33 s   \\
\hline
$D=4*919$, $h=2$, $\ell=1009$, $r=84$ &   -  & 43 s \\
\hline
\end{tabular}
\end{table}

\subsubsection*{An example}
For curves such that the index of $\Z[\pi]$ is divisible by a large power
of a small prime $\ell$, we use Kohel's algorithm combined with our method to
compute the largest power of $\ell$ dividing the conductor of the endomorphism ring.
Suppose we are given the curve with j-invariant $$j_0=71892495629450480796525055574120577929291359932$$
over the prime field defined by $$p=555574087029024034910907703752286309950415657009.$$
The discriminant of $\Z[\pi]$ is
$$d_{\pi}=2^2\cdot 31^{30}\cdot 1009,$$
hence the height of the $31$-volcano is 15.
The $31$-Sylow group structure is $\frac{\Z}{31^3\Z}\times \frac{\Z}{31^3 \Z}$ and the corresponding $k(E)=-1$,
hence we may not determine the level of the curve in the $31$-volcano by using pairings over $\F_p$. We could move to $\F_{p^{\ell}}$ and compute pairings over this field, but it is rather expensive. Instead, we use Kohel's algorithm to find the shortest path to the second stability level. For each curve we consider, we compute the corresponding pairings over $\F_p$ to see whether we get a polynomial $\PP$ different from zero. When we get such a polynomial, we stop because we have reached the second stability level. For example, a random walk in the volcano produces a shortest path to the second stability level given by the curves with $j$-invariants
$$j_1=304777814376748778212312171834280090074154445427~\mbox{and}~k(E_1)=-1,$$
$$j_2=191449283692968031770360270038328919070842850348~\mbox{and}~k(E_2)=-1,$$
$$j_3=500824144736236330809586376475032618300606767898~\mbox{and}~k(E_3)=-1,$$
$$j_4=439660047668527271074847223836176503148636315832~\mbox{and}~k(E_4)=0.$$
The curve $E_4$ lies on the second stability level, hence we deduce that the $31$-valuation of the index of $\Z[\pi]$ in
$\textrm{End}(E)$ is $9$.

\section{Conclusion and perspectives}

In this paper, we have proposed a method which allows one to determine, given a curve $E$ in the regular part of an isogeny volcano and an $\ell$-torsion point $P$ on the curve, the type of the $\ell$-isogeny whose kernel is spanned by $P$. In addition, this method permits one to find the ascending isogeny (or horizontal isogenies) from $E$, if a basis for the $\ell$-torsion is given. We expect that this method can be used to improve the performance of several volcano-based algorithms, such as the computation of the Hilbert class polynomial~\cite{Sutherland1} or of modular polynomials~\cite{Sutherland3}.\\
Finally, on an $\ell$-volcano, we have given a level invariant which can be determined by computing the structure of the $\ell$-Sylow group and
 a small number of pairings. This gives a new method to compute the $\ell$-adic valuation of the conductor of the endomorphism ring of an elliptic
  curve, for small values of $\ell$, and may thus be used in algorithms computing the endomorphism ring of an elliptic curve.\\

\noindent
\textbf{Acknowledgments.}
The authors thank Jean-Marc Couveignes for the idea in the proof of Lemma 1 and anonymous reviewers of the conference version~\cite{IonJou} for helpful comments. The first author is grateful to Ariane M\'ezard for many discussions on number theory and isogeny volcanoes, prior to this work.
 This work was partially supported by the French Agence Nationale de la Recherche through the ECLIPSES project under Contract ANR-09-VERS-018 and by the Direction G\'en\'erale de l'Armement through the AMIGA project under Contract 2010.60.055.

\bibliographystyle{plain}
\bibliography{sorina}

\end{document}